\begin{document}
\allowdisplaybreaks
\title{Closing the duality gap of the generalized trace ratio problem
\thanks{This research was supported by the National Natural Science Foundation of China under grants 12101041, 12171021.}
}


\author{ Meijia Yang \and   Yong Xia    }


\institute{
M. Yang \at School of Mathematics and Physics, University of Science and Technology Beijing, Beijing, 100083, P. R. China.\\
Y. Xia \at LMIB of the Ministry of Education, School of Mathematical Sciences, Beihang University, Beijing, 100191, P. R. China.\\
\email{myang@ustb.edu.cn (M. Yang); yxia@buaa.edu.cn (Y. Xia, Corresponding author)}
 }

\date{Received: date / Accepted: date}

\maketitle

\begin{abstract}
The generalized trace ratio problem {\rm (GTRP)} is to maximize a quadratic fractional objective function in trace formulation over the Stiefel manifold. 
In this paper, based on a newly developed matrix S-lemma, we show that {\rm (GTRP)}, if a redundant constraint is added and well scaled, has zero Lagrangian duality gap. However, this is not always true without the technique of scaling or adding the redundant constraint.  

 \keywords{Trace ratio problem \and Fractional programming \and Stiefel manifold \and Matrix S-lemma \and Lagrangian duality}
\subclass{90C26 \and  90C32 \and 90C46}
\end{abstract}

\section{Introduction}
Consider the generalized trace ratio problem {\rm (GTRP)}: 
\begin{eqnarray*}
{\rm(GTRP)}~&\max& \Phi(X)=\frac{tr(GX^TBX)}{tr(GX^TAX)}\\
~~~~~~~~~~~&{\rm s.t.}& X^TX=I_p,
\end{eqnarray*} 
where $X\in\Bbb R^{n\times p}$, $n\geq p$, $B\in\Bbb S^{n\times n}$ means that $B$ is a symmetric matrix in the real space with dimension $n\times n$. Throughout this paper, we assume that both $G\in\Bbb S^{p\times p}$ and $A\in\Bbb S^{n\times n}$ are positive definite matrices so that $tr(GX^TAX)> 0$ always holds for $\forall X\neq 0$. 

{\rm (GTRP)} is presented in \cite{YZX2023}, where the authors established the first-order necessary and sufficient optimality conditions for the global maximizer of {\rm (GTRP)}, the authors also proved that {\rm (GTRP)} has no local non-global maximizer and revealed the hidden convexity of {\rm (GTRP)}.  

{\rm (GTRP)} plays an important role in pattern recognition, computer vision and machine learning \cite{SDH2010}. {\rm (GTRP)} contains many famous problems as its special cases. For example, 
the problem of maximizing $\Phi(X)$ with $X$ being a permutation matrix is a fractional quadratic assignment problem {\rm (FQAP)}, where $G$ denotes the distance matrix, $A$ and $B$ denote the flow matrix. Then {\rm (GTRP)} can be seen as the orthogonal relaxation of {\rm (FQAP)}. In {\rm (GTRP)}, when $G=I_p$, it reduces to the trace ratio problem (TRP), which is well studied in \cite{ZLN2010,ZYL2014}; when $n=p$ and $A=I_n$ (or $tr(GX^TAX)\equiv 1$), it reduces to the orthogonal constrained homogeneous quadratic programming problem, denoted by {\rm (GP)}. {\rm (GP)} is well studied in \cite{ACWY1999,AW1999}; when $A=I_n$ (or $tr(GX^TAX)\equiv 1$), it reduces to the Brockett cost function optimization, which is an orthogonal relaxation of the Koopmans-Beckmann quadratic assignment problem\cite{A2003,Ed1980,Ed1977}, we denote this case by {\rm (GTP)}, which is well studied in \cite{FN2006}; when $G=I_p,~A=I_n$ holds together, it reduces to $\max_{X^TX=I_p} tr(X^TBX)$, whose optimal value is the sum of the first $p$ largest eigenvalues of $B$ \cite{FN2006}; when $p=1$, it reduces to $\max_{x^Tx=1} \frac{x^TBx}{x^TAx}$, which is a generalized Rayleigh quotient problem, denoted by {\rm (GRQ1)}; when $p=1$ and $A=I_n$, it reduces to $\max_{x^Tx=1} \frac{x^TBx}{x^Tx}$, denoted by {\rm (RQ)}, in this case, $x\in\Bbb R^n$ is a vector, so {\rm (RQ)} is the classical Rayleigh quotient problem, whose optimal value is the largest eigenvalue of $B$ \cite{HJ1985}.

Lagrangian dual plays an important role in optimization. If there is no Lagrangian duality gap, we can turn to solve the dual problem. Usually we can obtain the Lagrangian dual from the primal problem directly. But sometimes there might be duality gap. For example, as a special case of {\rm (GTRP)}, {\rm (GRQ1)} enjoys the strong Lagrangian duality property, which we will give an explanation in Example \ref{ex1}. However, in this paper, we will show that for the general case of {\rm (GTRP)}, there might be a Lagrangian duality gap. 

There are two useful techniques in closing the duality gap of some nonconvex optimization problems \cite{Xia2020}. The first one is adding redundant constraint. For example, in \cite{ACWY1999,AW1999}, the authors studied {\rm (GP)}, which is {\rm (GTRP)} with $n=p$ and $A=I_n$ (or $tr(GX^TAX)\equiv 1$). The authors showed that strong duality holds for {\rm (GP)} if the seemingly redundant constraints $XX^T=I_n$ (\cite{AW1999}) or $XX^T\preceq I_n$ (\cite{ACWY1999}) is added before taking the Lagrangian dual. 
In \cite{FN2006}, the authors studied {\rm (GTP)}, which is {\rm (GTRP)} with $A=I_n$. The authors showed that strong duality holds for {\rm (GTP)} if the seemingly redundant constraints $XX^T\preceq I_n$ is added before taking the Lagrangian dual. (The expression $XX^T\preceq I_n$ means that $XX^T-I_n$ is negative semidefinite). So we add $XX^T\preceq I_n$ to {\rm (GTRP)} and obtain {\rm (GR)}:
\begin{eqnarray*}
{\rm (GR)}~&\max& \frac{tr(GX^TBX)}{tr(GX^TAX)}\\
~~~~~~~~~~~&{\rm s.t.}& X^TX=I_p,\\
~~~~~~~~~~~&~~~~~~~~~~& XX^T\preceq I_n.
\end{eqnarray*}
However, we will prove in this paper that for the general case of {\rm (GR)}, it does not enjoy the strong Lagrangian duality property.

The second skill is scaling, which is very useful in fractional programming. In  \cite{YX2020}, the authors studied the two-sided quadratic constraint quadratic fractional programming problem, denoted by {\rm (QF)}, in which if the two-sided constraint reduces to the equality constraint and the quadratic terms are homogeneous, it reduces to {\rm (GRQ1)}, a special case of {\rm (GTRP)} by letting $p=1$. The authors showed that {\rm (QF)}, if well scaled, has strong Lagrangian duality property, which inspires us to make a scale on {\rm (GTRP)} and obtain {\rm (GS)}:
\begin{eqnarray*}
{\rm(GS)}~&\sup\limits_{X\neq 0}& \frac{tr(GX^TBX)}{tr(GX^TAX)}\\
~~~~~~~~~~~&{\rm s.t.}& \frac{X^TX}{tr(GX^TAX)}=\frac{I_p}{tr(GX^TAX)}.
\end{eqnarray*} 
However, in this paper, we will show that for the general case of {\rm (GS)}, it does not enjoy the strong Lagrangian duality property.

Since for the general case of {\rm (GTRP)}, {\rm (GR)}, {\rm (GS)}, all of them have the Lagrangian duality gap. So we combine the skills together and obtain {\rm (GRS)}: 
\begin{eqnarray*}
{\rm(GRS)}~&\sup\limits_{X\neq 0}& \frac{tr(GX^TBX)}{tr(GX^TAX)}\\
~~~~~~~~~~~&{\rm s.t.}& \frac{X^TX}{tr(GX^TAX)}=\frac{I_p}{tr(GX^TAX)},\\
~~~~~~~~~~~&~~~~~~~~~~& \frac{XX^T}{tr(GX^TAX)}\preceq \frac{I_n}{tr(GX^TAX)},
\end{eqnarray*}
we will prove in this paper that {\rm (GRS)} has no Lagrangian duality gap.

The paper is organized as follows. In section 2, we establish a matrix S-lemma based on which we show that {\rm (GRS)} enjoys the strong Lagrangian duality property, which means that we close the duality gap of {\rm (GTRP)} if the seemingly redundant constraint is added and well scaled before we take the Lagrangian dual. In section 3, we show that all of {\rm (GTRP)}, {\rm (GR)}, {\rm (GS)} have duality gap if we take the Lagrangian dual directly (without any other equivalent transformation). Conclusion are made in Section 4.

\textbf{Notation.} Let $x\in \Bbb R^n$ be a vector with dimension $n$ in the real space, and $Diag(x)$ denotes a diagonal matrix with dimension $n\times n$, whose diagonal elements are composed of $x$. Let $I_p$ be the $p\times p$ dimensional identity matrix. Let $A\in\Bbb S^{n\times n}$ be a symmetric matrix in the real space with dimension $n\times n$. Let $\lambda_{\max}(A)$ be the largest eigenvalue of the matrix $A$. Let $v(\cdot)$ be the optimal value of $(\cdot)$. We use $A\preceq(\succeq) B$ (or equivalently $A-B\preceq(\succeq) 0$) to denote that the matrix $A-B$ is negative(positive) semi-definite. Let $X\in\Bbb R^{n\times p}$ be a matrix with dimension $n\times p$ in the real space. We use vec(X) to denote the vector by stacking the columns of the matrix $X$. Denote the Kronecker product by $\otimes$. In order to improve the readability, we review the following three properties about Kronecker product \cite{HJ1985}:\\
(a). $tr(GX^TAX)=vec(X)^T(G\otimes A)vec(X)$,\\
$~~~~~~~tr(GXX^T)=vec(X)^T(I_p\otimes G)vec(X).$\\
(b). $(G\otimes A)^{-1}=G^{-1}\otimes A^{-1}$.\\
(c). $(A\otimes C)(B\otimes D)=AB\otimes CD$.
\section{Strong Lagrangian duality for {\rm (GRS)}}
In this section, we show that there is no Lagrangian duality gap for {\rm (GRS)}, which is equivalent to (GTRP). 
We first review an extension of the Farkas lemma, which will be used in the following proof. The Farkas lemma \cite{F1902} is the well-known alternative theorem for linear system. It has been extended to the convex system, which is known as convex Farkas lemma \cite{DJ2014,J2000,KRT2004,R70,SW1970,YWX2022}. 
\begin{lemma}[Convex Farkas lemma]\label{Flem}
Let $f, g_1, \ldots, g_m: \Bbb R^n \rightarrow \Bbb R$ be convex functions and $\Omega$ be a convex set in $\Bbb R^n$. Assume Slater's condition holds for $g_1, \ldots, g_m$,
i.e., there exists an $\widetilde{x}\in relint(\Omega)$ (the set of relative interior points of $\Omega$) such that for $i = 1, \ldots,m$, $g_i(\widetilde{x}) < 0$ if $g_i(x)$ is nonlinear, and $g_i(\widetilde{x}) \leq 0$
if $g_i(x)$ is linear. The following two
statements are equivalent:
\begin{itemize}
\item[(i)] The system $\{f(x)<0,~g_i(x)\le 0, ~i=1,\ldots,m,~x\in \Omega\}$ is not solvable.
\item[(ii)] There exist $\lambda_i\ge 0~(i=1,\ldots,m)$ such that
\[
f(x)+\sum_{i=1}^m \lambda_ig_i(x)\ge 0,~\forall x\in \Omega.
\]
\end{itemize}
\end{lemma}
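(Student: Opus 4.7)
The plan is to prove the two implications separately, handling the easy direction first. For $(ii) \Rightarrow (i)$, the argument is a one-liner: if nonnegative $\lambda_i$ satisfy $f(x) + \sum_{i=1}^{m} \lambda_i g_i(x) \geq 0$ on $\Omega$, then any $x \in \Omega$ with $g_i(x) \leq 0$ for every $i$ must have $f(x) \geq -\sum_{i=1}^m \lambda_i g_i(x) \geq 0$, ruling out $f(x) < 0$ and hence the system in $(i)$ cannot hold.

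For the substantive direction $(i) \Rightarrow (ii)$, I would set up a convex separation in a ``value plus slack'' space. Define
\[
C = \{(u_0, u_1, \ldots, u_m) \in \Bbb R^{m+1} : \exists\, x \in \Omega \text{ with } f(x) < u_0,\ g_i(x) \leq u_i,\ i = 1, \ldots, m\}.
\]
Convexity of $\Omega$ and of every $f, g_i$ makes $C$ a convex subset of $\Bbb R^{m+1}$, and the hypothesis in $(i)$ is exactly the statement that the origin does not belong to $C$. Applying a supporting hyperplane theorem at $0$ yields a nonzero vector $(\lambda_0, \lambda_1, \ldots, \lambda_m)$ such that $\lambda_0 u_0 + \sum_{i=1}^{m} \lambda_i u_i \geq 0$ for every $u \in C$. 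Sending individual coordinates to $+\infty$ while keeping a feasible witness fixed forces $\lambda_i \geq 0$ for $i = 0, 1, \ldots, m$, and substituting $u_0 \downarrow f(x)$ with $u_i = g_i(x)$ gives
\[
\lambda_0 f(x) + \sum_{i=1}^{m} \lambda_i g_i(x) \geq 0 \quad \text{for every } x \in \Omega.
\]

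The remaining task is to upgrade this to $\lambda_0 > 0$, after which dividing by $\lambda_0$ and relabeling the multipliers produces exactly the certificate claimed in $(ii)$. This is where Slater's hypothesis enters: assuming $\lambda_0 = 0$ and evaluating at $\widetilde{x}$ gives $\sum_{i=1}^m \lambda_i g_i(\widetilde{x}) \geq 0$, while every summand is nonpositive and strictly negative whenever $g_i$ is nonlinear, so $\lambda_i = 0$ for each nonlinear constraint. The surviving multipliers act only on linear constraints, and I would invoke the standard relative-interior refinement, perturbing $\widetilde{x}$ inside $\Omega$ to strictly decrease every active linear constraint value, to conclude that those $\lambda_i$ vanish as well, contradicting $(\lambda_0, \lambda) \neq 0$.

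I expect this final step, handling the mixed linear/nonlinear form of Slater's condition, to be the one genuinely delicate point; the separation argument, sign extraction by coordinate escape, and the direction $(ii) \Rightarrow (i)$ are all routine. In practice one usually cites a standard reference here rather than reproducing the relative-interior perturbation lemma, which is why the statement appears in the paper as a review item with multiple citations rather than with a self-contained proof.
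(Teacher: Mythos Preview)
Your anticipation in the final paragraph is exactly right: the paper does not prove this lemma at all. It is stated as a review item, attributed to several references \cite{DJ2014,J2000,KRT2004,R70,SW1970,YWX2022}, and then used as a black box to derive Lemma~\ref{lemmaFarkas} (the nonhomogeneous Farkas lemma). So there is no ``paper's own proof'' to compare against; your sketch already goes further than the paper does.

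As for the sketch itself, the separation argument via the value-plus-slack set $C$, the sign extraction by sending coordinates to $+\infty$, and the Slater step to force $\lambda_0>0$ are the standard route and are correctly outlined. The one place to be careful is the sentence ``perturbing $\widetilde{x}$ inside $\Omega$ to strictly decrease every active linear constraint value'': this is not always possible constraint by constraint (e.g.\ if $\Omega$ lies in the zero set of some affine $g_i$). The clean way to finish is to observe that the affine function $h(x)=\sum_{i\ \text{linear}}\lambda_i g_i(x)$ satisfies $h\ge 0$ on $\Omega$ and $h(\widetilde{x})=0$ with $\widetilde{x}\in\mathrm{relint}(\Omega)$, hence $h\equiv 0$ on $\mathrm{aff}(\Omega)$; one then argues that the separating hyperplane $(\lambda_0,\lambda)$ can be replaced by one with $\lambda_0>0$, or equivalently that the separation of $0$ from $C$ was not proper, contradicting the fact that $C$ has nonempty interior. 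You flagged this as the delicate point, which is accurate, and citing a reference (as the paper does) is the pragmatic choice.
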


Based on Lemma \ref{Flem}, we obtain the following special case of convex Farkas lemma, which is named as nonhomogeneous Farkas lemma and will be used in the proof of the main theorem (matrix S-lemma). We give a proof of nonhomogeneous Farkas lemma to improve the readability. 
\begin{lemma}[Nonhomogeneous Farkas Lemma]\label{lemmaFarkas}
Let $\tilde{A}\in\Bbb R^{p\times q}$, $\tilde{b}\in \Bbb R^p$,~$\tilde{c}\in \Bbb R^q$. Then the following two statements are equivalent:
\begin{itemize}
\item[(i)] The linear system $\tilde{A}^T\tilde{y}\leq \tilde{c},~\tilde{b}^T\tilde{y}< 0,~\tilde{y}(\in\Bbb R^p)\geq 0$ has no solution.
\item[(ii)]There exists $\tilde{x}(\in\Bbb R^q)\geq0$ such that $\tilde{A}\tilde{x}+\tilde{b}\geq 0$ and $\tilde{c}^T\tilde{x}\leq 0$.
\end{itemize}
\end{lemma}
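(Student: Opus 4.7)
The plan is to derive Lemma 2 from the Convex Farkas Lemma (Lemma \ref{Flem}), handling the two directions separately so that only the nontrivial one invokes Lemma \ref{Flem}.

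For the direction (ii) $\Rightarrow$ (i) I would argue directly by contradiction, without appealing to Lemma \ref{Flem}. Suppose $\tilde x \ge 0$ satisfies $\tilde A \tilde x + \tilde b \ge 0$ and $\tilde c^T \tilde x \le 0$, and suppose the system in (i) admits some $\tilde y \ge 0$ with $\tilde A^T \tilde y \le \tilde c$ and $\tilde b^T \tilde y < 0$. Evaluating $\tilde y^T(\tilde A \tilde x + \tilde b)$ in two ways gives
\[
0 \;\le\; \tilde y^T(\tilde A \tilde x + \tilde b) \;=\; (\tilde A^T \tilde y)^T \tilde x + \tilde b^T \tilde y \;\le\; \tilde c^T \tilde x + \tilde b^T \tilde y \;<\; 0,
\]
a contradiction.

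For the direction (i) $\Rightarrow$ (ii) I would invoke Lemma \ref{Flem} with $\Omega = \{\tilde y \in \Bbb R^p : \tilde y \ge 0\}$, linear objective $f(\tilde y) = \tilde b^T \tilde y$, and linear constraints $g_i(\tilde y) = (\tilde A^T \tilde y)_i - \tilde c_i$ for $i = 1, \ldots, q$. Since every $g_i$ is linear, the Slater qualification in Lemma \ref{Flem} reduces to the existence of a single $\tilde y > 0$ with $\tilde A^T \tilde y \le \tilde c$, which is the minimal qualification available in the linear setting. Lemma \ref{Flem} then yields multipliers $\lambda_1, \ldots, \lambda_q \ge 0$ such that
\[
\tilde b^T \tilde y + \sum_{i=1}^q \lambda_i \bigl[(\tilde A^T \tilde y)_i - \tilde c_i\bigr] \ge 0 \quad \forall\, \tilde y \ge 0.
\]
Collecting $\tilde x := (\lambda_1, \ldots, \lambda_q)^T \ge 0$ and rearranging, this reads $(\tilde A \tilde x + \tilde b)^T \tilde y \ge \tilde c^T \tilde x$ for all $\tilde y \ge 0$. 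Taking $\tilde y = 0$ forces $\tilde c^T \tilde x \le 0$, while letting each coordinate $\tilde y_j \to +\infty$ in turn forces $(\tilde A \tilde x + \tilde b)_j \ge 0$; together these give (ii).

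The step that will require the most care is the verification of Slater's condition in the application of Lemma \ref{Flem}. In this purely linear setting the qualification is unusually mild, and if one is worried about degenerate cases in which no strictly positive $\tilde y$ satisfies $\tilde A^T \tilde y \le \tilde c$, one can instead perturb $\tilde c$ slightly, apply Lemma \ref{Flem} to the perturbed data, and pass to the limit using the closedness of the polyhedral set of admissible $\tilde x$. Apart from this qualification check, the proof amounts to a bookkeeping translation of Lemma \ref{Flem} into the matrix form stated for Lemma 2.
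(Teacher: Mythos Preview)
Your proposal is correct and follows essentially the same route as the paper: apply the Convex Farkas Lemma (Lemma~\ref{Flem}) to the given linear data. The one substantive difference is where the constraint $\tilde y\ge 0$ lives. You absorb it into the convex set $\Omega=\{\tilde y\ge 0\}$, so the Slater point must lie in $\mathrm{relint}(\Omega)=\{\tilde y>0\}$; this is exactly the qualification you flag as needing care. The paper instead takes $\Omega=\Bbb R^p$ and encodes $\tilde y\ge 0$ as $p$ additional linear constraints $g_i(\tilde y)=-\tilde y_i$, so Slater reduces to mere feasibility of $\{\tilde y\ge0,\ \tilde A^T\tilde y\le\tilde c\}$ with no strict-positivity requirement; the extra multipliers $\tilde\lambda_i\ge0$ this produces are then eliminated via $\tilde b+\tilde A\tilde x=\tilde\lambda\ge0$, which is precisely your conclusion $\tilde A\tilde x+\tilde b\ge0$. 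Your separate, direct argument for (ii)$\Rightarrow$(i) is also fine and slightly more elementary than the paper's, which invokes the full equivalence in Lemma~\ref{Flem} for both directions at once.
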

\begin{proof}
Let $\tilde{A}\in\Bbb R^{p\times q}$, $\tilde{b}\in \Bbb R^p$,~$\tilde{c}\in \Bbb R^q$, $\tilde{y}\in\Bbb R^p$. In (i) of Lemma \ref{Flem}, let $f(\tilde{y})=\tilde{b}^T\tilde{y}$, $g_{i}(\tilde{y})=-\tilde{y}_i,~i=1,...,p$, $g_{p+1}(\tilde{y})=\tilde{A}^T\tilde{y}-\tilde{c}$, $\Omega=\Bbb R^n$. Then 
the system 
$\{f(\tilde{y})<0,~g_i(\tilde{y})\le 0, ~i=1,\ldots,p+1\}$ is not solvable is equivalent to the following statement that there exist $\tilde{\lambda}(\in\Bbb R^p)\geq 0$ and $\tilde{x}(\in\Bbb R^p)\geq 0$ such that 
$
\tilde{b}^T\tilde{y}-\sum_{i=1}^p\tilde{\lambda}_i\tilde{y}_i+\tilde{x}^T(\tilde{A}^T\tilde{y}-\tilde{c})\geq 0,~\forall \tilde{y}\in\Bbb R^p
$
hold. It can be further reformulated as 
$
(\tilde{b}-\tilde{\lambda}+\tilde{A}\tilde{x})^T\tilde{y}-\tilde{c}^T\tilde{x}\geq 0,~\forall \tilde{y}\in\Bbb R^p
$, 
which is equivalent to the system
$
\{\tilde{b}-\tilde{\lambda}+\tilde{A}\tilde{x}=0,~\tilde{c}^T\tilde{x}\leq 0\}.
$
Since $\tilde{\lambda}\geq 0$, the above system can be further equivalent to 
$
\{\tilde{b}+\tilde{A}\tilde{x}\geq0,~\tilde{c}^T\tilde{x}\leq 0\}
$. The proof is complete.
\end{proof}

We provide the following lemma for completeness.
\begin{lemma}\label{lemma1}
  Let $X\in\Bbb R^{n\times p}$ with $p\leq n$, then $X^TX=I_p$ implies $XX^T\preceq I_n$.
\end{lemma}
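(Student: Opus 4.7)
The plan is to use the fact that any set of $p$ orthonormal vectors in $\mathbb{R}^n$ can be extended to an orthonormal basis of $\mathbb{R}^n$, which converts the semidefinite inequality $XX^T \preceq I_n$ into an obvious consequence of a Gram identity.

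First I would interpret the hypothesis $X^TX = I_p$ columnwise: it says the $p$ columns of $X$ form an orthonormal set in $\mathbb{R}^n$. Since $p \leq n$, standard linear algebra (Gram–Schmidt on any completion of this set to a basis) produces a matrix $Y \in \mathbb{R}^{n \times (n-p)}$ whose columns complete those of $X$ to an orthonormal basis of $\mathbb{R}^n$. Equivalently, the augmented matrix $Q := [\,X\ \ Y\,] \in \mathbb{R}^{n \times n}$ satisfies $Q^TQ = I_n$.

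Next I would exploit the fact that for a square matrix $Q^TQ = I_n$ is equivalent to $QQ^T = I_n$. Expanding $QQ^T$ in block form gives
\begin{equation*}
I_n \;=\; QQ^T \;=\; XX^T + YY^T.
\end{equation*}
Since $YY^T$ is a Gram matrix it is positive semidefinite, i.e., $YY^T \succeq 0$. Rearranging yields $I_n - XX^T = YY^T \succeq 0$, which is precisely $XX^T \preceq I_n$.

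There is no real obstacle here; the only point worth checking is the existence of the orthonormal completion $Y$, which is standard (for instance, take any basis of the orthogonal complement of the column space of $X$ and orthonormalize). An equivalent single-step argument would use the singular value decomposition $X = U \Sigma V^T$ with $\Sigma^T\Sigma = I_p$ forcing all $p$ singular values to equal $1$, so that $XX^T = U\,\mathrm{diag}(I_p, 0_{n-p})\,U^T \preceq U I_n U^T = I_n$; I would keep the completion argument as the main proof since it is the most elementary.
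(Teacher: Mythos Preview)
Your proof is correct but follows a different route from the paper. The paper argues directly that $XX^T$ is idempotent: from $X^TX=I_p$ one gets $(XX^T)^2=X(X^TX)X^T=XX^T$, and since $XX^T$ is symmetric its eigenvalues must satisfy $\lambda=\lambda^2$, hence lie in $\{0,1\}$, which immediately gives $0\preceq XX^T\preceq I_n$. Your approach instead completes $X$ to an orthogonal matrix $Q=[X\ Y]$ and reads off $I_n=QQ^T=XX^T+YY^T$ with $YY^T\succeq 0$. Both are short and elementary; the paper's idempotent argument is marginally more self-contained (it avoids invoking Gram--Schmidt or any basis completion), while your argument has the advantage of exhibiting the explicit structure $I_n-XX^T=YY^T$ as the projection onto the orthogonal complement of the column space of $X$.
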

\begin{proof}
  Since $X^TX=I_p$, then $XX^T=X(X^TX)X^T=XX^TXX^T$. Let $Y=XX^T$, then we have $Y=Y^2$ (so $Y$ is an idempotent matrix). 
  
  Let $\lambda$ be any eigenvalue of $Y$ and the corresponding eigenvector is $x(\in\Bbb R^n)\neq 0$, then according to $Yx=\lambda x$, we have $Y^2x=Y\lambda x=\lambda^2x$. Combined with $Y=Y^2$, we obtain $\lambda x=\lambda^2x$, hence $\lambda=0~or~1$ since $x\neq 0$. So the eigenvalues of $Y$ are either 0 or 1. 
  
  So $0_n\preceq Y\preceq I_n$, i.e., $(0_n\preceq)XX^T\preceq I_n$. The proof is complete.
\end{proof}

\begin{lemma}\label{lemma4}
Let $H(\in\Bbb S^{p\times p})=U_hDiag(\lambda)U_h^T$ and $Q(\in\Bbb S^{n\times n})=U_qDiag(\mu)U_q^T$, where $U_h$ and $U_q$ are orthogonal matrices due to the symmetry of $H$ and $Q$, $\lambda=(\lambda_1,...,\lambda_p)^T$ and $\mu=(\mu_1,...,\mu_n)^T$ are the vectors composed of the eigenvalues of $H$ and $Q$, respectively. Let  
\begin{eqnarray*}
S_1&=&\left\{X\in\Bbb R^{n\times p}:~tr(HX^TQX)<0,X^TX=I_p\right\},\label{eqM1}\\
S_2&=&\left\{\pm U_q\sqrt{Z}U_h^T:~Z\in\Bbb R^{n\times p},\sum_{i=1}^n\sum_{j=1}^p\lambda_j\mu_iZ_{ij}<0,\sum_{i=1}^nZ_{ij}=1,\forall j=1,...,p;\right.\nonumber\\
&&\left.~~~~~~~~~~~~\sum_{j=1}^pZ_{ij}\leq1,\forall i=1,...,n;Z_{ij}\geq 0,i=1,...,n,j=1,...,p\right\}.\label{eqM4}
\end{eqnarray*} 
Then the following two results hold true.\\
(R1) $S_1 \subseteq S_2$.\\
(R2) The statement $S_1=\emptyset$ is equivalent to $S_2=\emptyset$.
\end{lemma}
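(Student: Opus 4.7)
The plan is to exploit the spectral decompositions $H=U_h\,Diag(\lambda)\,U_h^T$ and $Q=U_q\,Diag(\mu)\,U_q^T$ through the change of variable $Y=U_q^TXU_h$. Under this substitution, $X^TX=I_p$ is equivalent to $Y^TY=I_p$, and the cyclic property of trace together with a direct computation gives $tr(HX^TQX)=\sum_{i=1}^n\sum_{j=1}^p\lambda_j\mu_iY_{ij}^2$. The matrix $Z$ appearing in $S_2$ is intended to capture exactly the squared entries $Z_{ij}=Y_{ij}^2$, so that the whole lemma can be reduced to reading off constraints on $Z$ from the constraints satisfied by $Y$.

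For (R1), given $X\in S_1$ I would set $Y=U_q^TXU_h$ and $Z_{ij}=Y_{ij}^2$. Then each constraint defining $S_2$ follows directly: $Z\ge 0$ is automatic; $\sum_iZ_{ij}=1$ is the unit-norm of each column of $Y$, a consequence of $Y^TY=I_p$; $\sum_jZ_{ij}\le 1$ is the diagonal of the relation $YY^T\preceq I_n$, which is exactly Lemma~\ref{lemma1} applied to $Y$; and the strict trace inequality is the identity noted above. Since $X=U_qYU_h^T$ and the entries of $Y$ are signed square roots of those of $Z$, the matrix $X$ belongs to $S_2$.

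For (R2), the implication $S_2=\emptyset\Rightarrow S_1=\emptyset$ is immediate from (R1). For the converse, I would start from the existence of some $Z$ satisfying the three polytope constraints together with $\sum_{i,j}\lambda_j\mu_iZ_{ij}<0$, and minimize the linear functional $\sum_{i,j}\lambda_j\mu_iZ_{ij}$ over the polytope $\{Z\ge 0:\ \sum_iZ_{ij}=1\ \forall j,\ \sum_jZ_{ij}\le 1\ \forall i\}$. Since this polytope is nonempty and bounded (each entry lies in $[0,1]$), the minimum is attained at a vertex $Z^*$ with strictly negative value. Introducing slack variables $s_i$ to convert the row-sum inequalities into equalities $\sum_jZ_{ij}+s_i=1$ recasts the feasible set as a transportation-type polytope whose constraint matrix is totally unimodular. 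The vertices are therefore $0/1$ matrices with exactly one $1$ per column and at most one $1$ per row, so $Z^*$ encodes an injection $\sigma:\{1,\ldots,p\}\to\{1,\ldots,n\}$. Defining $Y^*$ by $Y^*_{ij}=\sqrt{Z^*_{ij}}$ makes its columns distinct standard basis vectors, whence $(Y^*)^TY^*=I_p$, and $X^*:=U_qY^*U_h^T$ lies in $S_1$.

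The hard part is the vertex characterization used in (R2); once the LP is put in standard form and recognized as a column-restricted transportation polytope, total unimodularity delivers integrality of vertices, and from there the construction of $X^*$ from $Z^*$ is routine.
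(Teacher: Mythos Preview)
Your proof is correct and follows the same overall strategy as the paper: the change of variable $Y=U_q^TXU_h$, the relaxation $Z_{ij}=Y_{ij}^2$ together with Lemma~\ref{lemma1} for (R1), and for (R2) the reduction to showing that the ``sub-doubly-stochastic'' polytope $\{Z\ge 0:\ \sum_iZ_{ij}=1,\ \sum_jZ_{ij}\le 1\}$ has $0/1$ vertices so that a feasible $Z$ with negative objective yields an orthonormal $Y^*$. The only difference is in how that integrality is obtained: you add a single slack per row and invoke total unimodularity of the resulting transportation-type constraint matrix, whereas the paper pads $Z$ with $n-p$ extra columns to an $n\times n$ doubly stochastic matrix and applies the Birkhoff--von Neumann theorem, extracting a permutation matrix from the convex combination. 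The two devices are essentially equivalent here; your TU argument is slightly more direct since it avoids the auxiliary extension and the averaging step, while the paper's route makes the connection to permutation matrices more explicit.
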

\begin{proof}
\textbf{Proof of (R1)}. According to the definition, we obtain that 
\begin{eqnarray}
&&S_1\nonumber\\
&=&\left\{X\in\Bbb R^{n\times p}:~tr(U_hDiag(\lambda)U_h^TX^TU_qDiag(\mu)U_q^TX)<0,~X^TX=I_p\right\}\nonumber\label{eqM5}\\
&=&\left\{X\in\Bbb R^{n\times p}:~tr(Diag(\lambda)U_h^TX^TU_qDiag(\mu)U_q^TXU_h)<0,~X^TX=I_p\right\}\nonumber\\
&=&\left\{U_qYU_h^T\in\Bbb R^{n\times p}:tr(Diag(\lambda)Y^TDiag(\mu)Y)<0,Y^TY=I_p,Y\in\Bbb R^{n\times p}\right\}\label{eqM2}\\
&=&\left\{U_qYU_h^T\in\Bbb R^{n\times p}:tr(Diag(\lambda)Y^TDiag(\mu)Y)<0,Y^TY=I_p,YY^T\preceq I_n\right\}\label{eqM3}\\
&\subseteq &\left\{U_qYU_h^T\in\Bbb R^{n\times p}:\sum_{i=1}^n\sum_{j=1}^p\lambda_j\mu_iY_{ij}^2<0,\sum_{i=1}^nY_{ij}^2=1,\forall j=1,...,p;\right.\nonumber\\
&&\left.~~~~~~~~~~~~~~~~~~~~~~~~~~~~~~~~~~~~~~~~~~~~~~~~~~~~~~~~~\sum_{j=1}^pY_{ij}^2\leq1,\forall i=1,...,n\right\}\label{ieqM1}\\
&=&S_2,\label{eqM4}
\end{eqnarray}    
where (\ref{eqM2}) holds by letting $Y=U_q^TXU_h$, (\ref{eqM3}) holds by Lemma \ref{lemma1}, (\ref{ieqM1}) holds by relaxing the off-diagonal constraints in $Y^TY=I_p$ and $YY^T\preceq I_n$. (\ref{eqM4}) holds by letting $Z_{ij}=Y_{ij}^2$ for $\forall i=1,...,n,~j=1,...,p$, and $\sqrt{Z}$ means that we take the square root of each element of the matrix $Z$. Thus $S_1\subseteq S_2$. The proof of (R1) is complete.

\textbf{Proof of (R2)}. Since $S_1\subseteq S_2$, so if $S_2=\emptyset$, it must hold that $S_1=\emptyset$. Then we mainly prove that if $S_1=\emptyset$, then $S_2=\emptyset$ holds. We prove this case on the contrary, which means that we prove if $S_2\neq\emptyset$, then $S_1\neq\emptyset$. 
In $S_2$, we first introduce $n\times(n-p)$ nonnegative slack 
variables $\tilde{Z}_{ij},~i=1,...,n,~j=p+1,...,n$ and obtain $S_2\Leftrightarrow S_5$, where  
\begin{eqnarray}
&&S_5
= \left\{\pm U_q\sqrt{Z}U_h^T:\tilde{Z}\in\Bbb R^{n\times n},\sum_{i=1}^n\sum_{j=1}^p\lambda_j\mu_i\tilde{Z}_{ij}<0,
\sum_{i=1}^n\tilde{Z}_{ij}=1,\right.\nonumber\\
&&\left.\forall j=1,...,n;
\sum_{j=1}^n\tilde{Z}_{ij}=1,\forall i=1,...n;\tilde{Z}_{ij}\geq 0,i=1,...,n,j=1,...,n\right\},\label{eq51}
\end{eqnarray}
where $\tilde{Z}(\in\Bbb R^{n\times n}):= \left(Z,~\bar{Z}\right)$ with
$Z_{ij}$=$\tilde{Z}_{ij}$ for $i=1,...,n,~j=1,...,p$, $\bar{Z}_{ij}=\tilde{Z}_{i,p+j}$ for $i=1,...,n,~j=1,...,n-p$, i.e.,
\begin{eqnarray*}
Z=\left(\begin{array}{cccc}
\tilde{Z}_{11}&\tilde{Z}_{12}&...&\tilde{Z}_{1p}\\
\tilde{Z}_{21}&\tilde{Z}_{22}&...&\tilde{Z}_{2p}\\
\vdots\\
\tilde{Z}_{n1}&\tilde{Z}_{n2}&...&\tilde{Z}_{np}
\end{array}\right),~
\bar{Z}:=\left(\begin{array}{cccc}
\tilde{Z}_{1,p+1}&\tilde{Z}_{1,p+2}&...&\tilde{Z}_{1,n}\\
\tilde{Z}_{2,p+1}&\tilde{Z}_{2,p+2}&...&\tilde{Z}_{2,n}\\
\vdots\\
\tilde{Z}_{n,p+1}&\tilde{Z}_{n,p+2}&...&\tilde{Z}_{n,n}
\end{array}\right).
\end{eqnarray*}
So we turn to prove that if $S_5\neq\emptyset$, then $S_1\neq\emptyset$. It is known that  
\begin{eqnarray*}
S_3&=&\left\{\tilde{Z}\in\Bbb R^{n\times n}:~\sum_{i=1}^n\tilde{Z}_{ij}=1,\forall j=1,...,n;~\sum_{j=1}^n\tilde{Z}_{ij}=1,\forall i=1,...,n;\right.\\
&&\left.~~~~~~~~~~~~~~~~~~~\tilde{Z}_{ij}\geq 0,~\forall i=1,...,n,j=1,...,n\right\}
\end{eqnarray*}
is the set of doubly stochastic matrix, whose vertices are in the following set composed of permutation matrix:
\begin{eqnarray*}
S_4&=&\left\{\tilde{Z}\in\Bbb R^{n\times n}:~\sum_{i=1}^n\tilde{Z}_{ij}=1,\forall j=1,...,n;~\sum_{j=1}^n\tilde{Z}_{ij}=1,\forall i=1,...,n;\right.\\
&&\left.~\tilde{Z}_{ij}\in\{0,~1\},\forall i=1,...,n,j=1,...,n\right\}.
\end{eqnarray*}
Then $S_5=\left\{\pm U_q\sqrt{Z}U_h^T:\sum_{i=1}^n\sum_{j=1}^p\lambda_j\mu_i\tilde{Z}_{ij}<0,\tilde{Z}\in S_3\right\}$. Assume that $S_5\neq \emptyset$ and denote the feasible solution of $S_5$ by $\tilde{Z}^*$. Following the 
the Birkhoff-von Neumann theorem \cite{B1946,vN1953} that any doubly stochastic matrix is the convex combination of permutation matrices, we can say that there exist vertices $\{\tilde{Z}^1,~\tilde{Z}^2,...,\tilde{Z}^K\}\in S_4$ such that 
$\tilde{Z}^*=\sum_{k=1}^Ka_k\tilde{Z}^k$, where $a_k\geq 0,~\forall k=1,...,K,~\sum_{k=1}^Ka_k=1$, 
 and 
\[
0> \sum_{i=1}^n\sum_{j=1}^p\lambda_j\mu_i\tilde{Z}_{ij}^*=\sum_{i=1}^n\sum_{j=1}^p\lambda_j\mu_i(\sum_{k=1}^Ka_k\tilde{Z}^k)_{ij}
 =\sum_{k=1}^K\sum_{i=1}^n\sum_{j=1}^p\lambda_j\mu_i(a_k\tilde{Z}^k)_{ij}
 \] 
 hold. As a result, there must exist $k_0$ such that $\tilde{Z}^{k_0}\in S_4$ and it satisfies
$
\sum_{i=1}^n\sum_{j=1}^p\lambda_j\mu_i(\tilde{Z}^{k_0})_{ij}<0.
$

Up to now, we have proved the existence of $\tilde{Z}^{k_0}\in S_4$. Next we recover $Y^{k_0}$. Since $\tilde{Z}^{k_0}\in S_4$, 
we can conclude that it satisfies
\begin{eqnarray*}
&&\sum_{i=1}^n\tilde{Z}^{k_0}_{ij}=1,\forall j=1,...,p;~\sum_{j=1}^p\tilde{Z}^{k_0}_{ij}\leq1,\forall i=1,...,n;\\
&&\tilde{Z}^{k_0}_{ij}\in\{0,~1\},\forall i=1,...,n,~j=1,...,p.
\end{eqnarray*}
More precisely, for $\sum_{j=1}^p\tilde{Z}^{k_0}_{ij}\leq1,\forall i=1,...,n$, there are $p$ of them that satisfies $\sum_{j=1}^p\tilde{Z}^{k_0}_{ij}=1$, and the other $n-p$ satisfies $\sum_{j=1}^p\tilde{Z}^{k_0}_{ij}=0$.

Let $Y^{k_0}\in\Bbb R^{n\times p}$, and $(Y^{k_0})_{ij}=\sqrt{(Z^{k_0})_{ij}}, i=1,...,n,~j=1,...,p$, then it holds that
$
\sum_{i=1}^n\sum_{j=1}^p\lambda_j\mu_i((Y^{k_0})_{ij})^2<0,~\sum_{i=1}^n(Y^{k_0})_{i}=1,\sum_{j=1}^p(Y^{k_0})_{j}\leq1,~Y^{k_0}\in\{0,1\}.
$
It is obvious that 
\[
Y^{k_0}\in \{Y\in\Bbb R^{n\times p}:~tr(Diag(\lambda)Y^TDiag(\mu)Y)<0,~Y^TY=I_p,~YY^T\preceq I_n\}.
\]

To sum up, we have proved that if $S_5\neq\emptyset$ (i.e., $S_2\neq\emptyset$), there must exist $Y^{k_0}$ that satisfies the constraints in (\ref{eqM3}), which is equal to $S_1$. So we have proved that if $S_2\neq\emptyset$, then $S_1\neq\emptyset$.

According to the above statement, it holds true that $S_1=\emptyset\Leftrightarrow S_2=\emptyset$. 
The proof of (R2) is complete.
\end{proof}

Now we are ready to propose the following alternative theorem. We name it as Matrix S-lemma: 
\begin{theorem}[Matrix S-lemma]\label{thmM}
Let $H\in\Bbb S^{p\times p}$ and $Q\in\Bbb S^{n\times n}$. The following two statements are equivalent:\\
(i)~~$\left\{X\in\Bbb R^{n\times p}:~tr(HX^TQX)<0,~X^TX=I_p\right\}=\emptyset$.\\
(ii) There exist $M\in\Bbb S^{p\times p}$ and $W(\in\Bbb S^{n\times n})\succeq 0$, such that 
\begin{eqnarray}
tr\left(HX^TQX\right)+tr\left(M(X^TX-I_p)\right)+tr\left(W(XX^T-I_n)\right)\geq 0,\forall X\in\Bbb R^{n\times p}.\label{thM2}
\end{eqnarray}
\end{theorem}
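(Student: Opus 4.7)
The plan is to handle the two directions separately. The direction (ii) $\Rightarrow$ (i) is immediate: if $X^TX = I_p$ then the $M$-term in (\ref{thM2}) vanishes, and Lemma \ref{lemma1} gives $XX^T - I_n \preceq 0$, so the $W$-term is nonpositive because $W \succeq 0$. Consequently $tr(HX^TQX) \geq 0$, precluding membership in the set in (i).

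For the substantive direction (i) $\Rightarrow$ (ii), I would first diagonalize $H = U_h Diag(\lambda) U_h^T$ and $Q = U_q Diag(\mu) U_q^T$ as in Lemma \ref{lemma4}. By Lemma \ref{lemma4}(R2), the emptiness assumption on the quadratic system $S_1$ is equivalent to $S_2 = \emptyset$, which is infeasibility of a linear system in the entrywise nonnegative variable $Z$ with equality constraints $\sum_i Z_{ij} = 1$ for $j = 1, \ldots, p$, inequality constraints $\sum_j Z_{ij} \leq 1$ for $i = 1, \ldots, n$, and the strict inequality $\sum_{i,j} \lambda_j \mu_i Z_{ij} < 0$. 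I would then invoke Lemma \ref{lemmaFarkas} (after splitting each equality into two inequalities) to obtain real scalars $m_1, \ldots, m_p$ and nonnegative scalars $w_1, \ldots, w_n$ satisfying
\begin{eqnarray*}
\lambda_j \mu_i + m_j + w_i \geq 0,~\forall i, j, \qquad \sum_{j=1}^p m_j + \sum_{i=1}^n w_i \leq 0.
\end{eqnarray*}

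With these multipliers in hand, I would set $M = U_h Diag(m) U_h^T \in \Bbb S^{p\times p}$ and $W = U_q Diag(w) U_q^T \succeq 0$, and verify (\ref{thM2}) via the change of variables $Y = U_q^T X U_h$ used in the proof of Lemma \ref{lemma4}. Using cyclic invariance of the trace together with $U_h^T U_h = I_p$ and $U_q^T U_q = I_n$, the left-hand side of (\ref{thM2}) collapses to
\begin{eqnarray*}
\sum_{i=1}^n \sum_{j=1}^p (\lambda_j \mu_i + m_j + w_i) Y_{ij}^2 - \sum_{j=1}^p m_j - \sum_{i=1}^n w_i,
\end{eqnarray*}
which is nonnegative for every $Y \in \Bbb R^{n \times p}$, and hence for every $X \in \Bbb R^{n \times p}$, by the two displayed inequalities above.

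The main obstacle is, in my view, already absorbed by Lemma \ref{lemma4}(R2): passing from the quadratic Stiefel-type constraints to linear doubly-substochastic ones via the squared-entries substitution is the substantive ingredient, and was handled there by appealing to the Birkhoff--von Neumann theorem. Once that reduction is in place, the rest of the argument reduces to a linear-programming duality computation through Lemma \ref{lemmaFarkas} and a routine diagonalization back to the original bases of $H$ and $Q$.
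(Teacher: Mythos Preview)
Your proposal is correct and follows essentially the same route as the paper: both directions match, and for (i)$\Rightarrow$(ii) you diagonalize $H$ and $Q$, invoke Lemma~\ref{lemma4}(R2) to reduce to the linear system $S_2=\emptyset$, apply Lemma~\ref{lemmaFarkas} (with equalities split into pairs of inequalities) to produce multipliers $m_j\in\Bbb R$, $w_i\ge 0$, and then set $M=U_hDiag(m)U_h^T$, $W=U_qDiag(w)U_q^T$. The only cosmetic difference is that you verify (\ref{thM2}) directly via the substitution $Y=U_q^TXU_h$, whereas the paper first rewrites (\ref{thM2}) as the Kronecker-product condition $H\otimes Q+M\otimes I_n+I_p\otimes W\succeq 0$ with $tr(M)+tr(W)\le 0$ and then conjugates the diagonal certificate by $U_h\otimes U_q$; these are the same computation in different notation.
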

\begin{proof}
\textbf{We first prove that (ii) implies (i)}. 
We give this proof on the contrary. Assume that there exists $\tilde{X}\in\Bbb R^{n\times p}$ such that $tr(H\tilde{X}^TQ\tilde{X})<0$ and $\tilde{X}^T\tilde{X}=I_p$ hold. According to Lemma \ref{lemma1}, it also holds that $\tilde{X}\tilde{X}^T-I_n\preceq 0$. As a result, 
\[
 tr\left(H\tilde{X}^TQ\tilde{X}\right)+tr\left(M(\tilde{X}^T\tilde{X}-I_p)\right)+tr\left(W(\tilde{X}\tilde{X}^T-I_n)\right)< 0
\]
holds for $\forall M\in\Bbb S^{p\times p}$ and $\forall W(\in\Bbb S^{n\times n})\succeq 0$. It means that there does not exist $M\in \Bbb R^{p\times p}$ and $W\succeq 0$ such that (\ref{thM2}) holds for $\tilde{X}$, which leads to a contradiction. Hence the assumption does not hold. The first part of the proof is complete.
\textbf{Next, we mainly show that (i) implies (ii)}. In the following, we continue to employ the definition of the matrices $H=U_hDiag(\lambda)U_h^T$, $Q=U_qDiag(\mu)U_q^T$ and the sets $S_1$, $S_2$ as in Lemma \ref{lemma4}. Then, according to Lemma \ref{lemma4}, (i) can be restated as $S_1=\emptyset$. Furthermore, according to (R2) of Lemma \ref{lemma4}, we can turn to prove $S_2=\emptyset$ implies (ii). One more step, we can make a transformation of (ii). In (ii), we have  
\begin{eqnarray}
&&tr\left(HX^TQX\right)+tr\left(M(X^TX-I_p)\right)+tr\left(W(XX^T-I_n)\right)\geq 0,\forall X\in\Bbb R^{n\times p}\nonumber\\
\Leftrightarrow&&vec(X)^T (H\otimes Q +M\otimes I_n+I_p\otimes W)vec(X) \nonumber\\
&&~~~~~~~~~~~~~~~~~~~~~~~~~~~~~~~~~~~~~~~~~~ -tr(MI_p)-tr(WI_n)\geq 0,
\forall X\in\Bbb R^{n\times p}\nonumber\\
\Leftrightarrow&&H\otimes Q +M\otimes I_n+I_p\otimes W\succeq 0,~ tr(MI_p)+tr(WI_n)\leq 0.\label{eqM6}
\end{eqnarray}
\textbf{So we finally turn to prove that if $S_2=\emptyset$, then there exist $M\in\Bbb S^{p\times p}$ and $W(\in\Bbb S^{n\times n})\succeq 0$ such that (\ref{eqM6}) holds}.

We can rewrite the linear system in the constraint of $S_2$ as the format in nonhomogeneous Farkas lemma (Lemma \ref{lemmaFarkas}) by stacking the columns of $Z$ and denote it by $\tilde{y}$:
\[
\tilde{y}^T(\geq 0)=(Z_{11},Z_{21},...,Z_{n1}|Z_{12},Z_{22},...,Z_{n2}|...|Z_{1p},Z_{2p},...,Z_{np})\in\Bbb R^{1\times (n\times p)}.
\]
Let
\begin{eqnarray*}
\tilde{A}^T=\left(\begin{array}{cccc}
~E_1&~E_2&...&~E_p\\
-E_1&-E_2&...&-E_p\\
~I_n&~I_n&...&~I_n
\end{array}\right),~~~
E_i=\left(\begin{array}{cccc}
0&~~0&...&~~0\\
\vdots\\
1&~~1&...&~~1\\
\vdots\\
0&~~0&...&~~0
\end{array}\right),~\forall i=1,...,p,
\end{eqnarray*}
where $\tilde{A}^T\in\Bbb R^{(2p+n)\times(n\times p)}$, and $E_i\in\Bbb R^{p\times n},\forall i=1,...,p$ is the matrix with the entries in the $i$-th row are $1$ and the others are 0.

Let 
\begin{eqnarray*}
&&\tilde{c}^T=\left(1,1,...,1|-1,-1,...,-1|1,1,...,1
\right)\in\Bbb R^{1\times (2p+n)},\\
&&\tilde{b}^T=
(\lambda_1\mu_1,\lambda_1\mu_2,...,\lambda_1\mu_n|\lambda_2\mu_1,\lambda_2\mu_2,...,\lambda_2\mu_n|...|\lambda_p\mu_1,\lambda_p\mu_2,...,\lambda_p\mu_n)\\
&&~~~~\in\Bbb R^{1\times (n\times p)}.
\end{eqnarray*}
Then \{$S_2=\emptyset$\}$\Leftrightarrow$\{$\tilde{b}^T\tilde{y}<0,~\tilde{A}^T\tilde{y}\leq \tilde{c},~\tilde{y}\geq 0$ has no solution\}. According to nonhomogeneous Farkas lemma (Lemma \ref{lemmaFarkas}), there exists 
\[
\tilde{x}^T(\geq 0)=(\tilde{x}_1,\tilde{x}_2,...,\tilde{x}_p|\tilde{x}_{p+1},\tilde{x}_{p+2},...,\tilde{x}_{2p}|\tilde{x}_{2p+1},\tilde{x}_{2p+2},...,\tilde{x}_{2p+n})\in\Bbb R^{1\times (2p+n)}
\]
such that $\tilde{A}\tilde{x}+\tilde{b}\geq 0$ and $\tilde{c}^T\tilde{x}\leq 0$ hold. Then we get the following system which has $np+1$ inequalities:
\begin{eqnarray*}
&&\tilde{x}_i-\tilde{x}_{p+i}+\tilde{x}_{2p+j}+\lambda_i\mu_j\geq 0,~\forall i=1,...,p,~j=1,...,n,\\
&&\tilde{x}_1+...+\tilde{x}_p-\tilde{x}_{p+1}-...-\tilde{x}_{2p}+\tilde{x}_{2p+1}+...+\tilde{x}_{2p+n}\leq 0.
\end{eqnarray*}
%
%
Then we can introduce $p$ free variable $\hat{x}\in\Bbb R^{p\times 1}$ with $\hat{x}_i=\tilde{x}_i-\tilde{x}_{p+i}$ and the above system turns to be
\begin{eqnarray*}
&&\hat{x}_i+\tilde{x}_{2p+j}+\lambda_i\mu_j\geq 0,~\forall i=1,...,p,~j=1,...,n,\\
&&\hat{x}_1+...+\hat{x}_p+\tilde{x}_{2p+1}+...+\tilde{x}_{2p+n}\leq 0,
\end{eqnarray*}
where $\hat{x}_i\in\Bbb R$ with $i=1,...,p$, and $\tilde{x}_i\geq 0$ with $i=2p+1,...,2p+n$.
Then the above system can be rewritten as
\begin{eqnarray}
&&Diag(\lambda)\otimes Diag(\mu)+D_1\otimes I_n+I_p\otimes D_2\succeq 0,\label{eqD2}\\ &&tr(D_1I_p)+tr(D_2I_n)\leq 0,\label{eqD3}
\end{eqnarray} 
where $D_1=Diag(\hat{x}_1,...,\hat{x}_p)$ and $D_2(\succeq 0)=Diag(\tilde{x}_{2p+1},...,\tilde{x}_{2p+n})$ are diagonal matrices. 
Multiplying $U_h\otimes U_q$ on both sides of the left-hand side of (\ref{eqD2}) and $(U_h\otimes U_q)^T$ on both sides of the right-hand side of (\ref{eqD2}). Following the property of Kronecker product,
 we get  
\begin{eqnarray}
(\ref{eqD2})
&\Leftrightarrow&(U_h\otimes U_q)(Diag(\lambda)\otimes Diag(\mu))(U_h\otimes U_q)^T\nonumber\\
&&+(U_h\otimes U_q)(D_1\otimes I_n)(U_h\otimes U_q)^T
+(U_h\otimes U_q)(I_p\otimes D_2)(U_h\otimes U_q)^T\succeq 0\nonumber\\
&\Leftrightarrow&H\otimes Q+\left(U_hD_1U_h^T\right)\otimes I_n+I_p\otimes \left(U_qD_2U_q^T\right)\succeq 0\nonumber\\
&\Leftrightarrow&H\otimes Q+M\otimes I_n+I_p\otimes W\succeq 0\nonumber.
\end{eqnarray}
As for (\ref{eqD3}), it holds that
\begin{eqnarray*}
&&tr(D_1I_p)+tr(D_2I_n)=tr(D_1U_h^TI_pU_h)+tr(D_2U_q^TI_nU_q)\\
&=&tr(U_hD_1U_h^TI_p)+tr(U_qD_2U_q^TI_n)=tr(MI_p)+tr(WI_n).
\end{eqnarray*}
So
$
(\ref{eqD3})\Leftrightarrow tr(MI_p)+tr(WI_n)\leq 0,
$
where $M=U_hD_1U_h^T$ and $W=U_qD_2U_q^T$. Since $U_q$ is the orthogonal matrix, which keeps the positive-semidefiniteness of $D_2$. As a result, $W\succeq 0$ holds. 

Therefore, we have finished the proof that if $S_2=\emptyset$, then there exist $M\in\Bbb S^{p\times p}$ and $W(\in\Bbb S^{n\times n})\succeq 0$ such that (\ref{eqM6}) holds. The whole proof is complete.
\end{proof}

\begin{theorem}
\begin{eqnarray}
v(GTRP)
&&=\inf\left\{\mu:\mu G\otimes A-G\otimes B+M\otimes I_n+I_p\otimes W\succeq 0,\right.\nonumber\\
&&\left. ~~~~~~~~~~~~~~~~~~~~~~~~~~~~~~~~~tr(M)+tr(W)\leq 0,W\succeq 0\right\}\label{vGTRP}
\end{eqnarray}
\end{theorem}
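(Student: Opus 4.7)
The plan is to cast $v(GTRP)$ as the infimum of a scalar $\mu$ characterizing when the set appearing in hypothesis (i) of the Matrix S-lemma is empty, and then invoke the Kronecker-product reformulation of (ii) already carried out inside the proof of Theorem \ref{thmM} to recover the claimed SDP.

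First, since $G$ and $A$ are positive definite, $tr(GX^TAX)>0$ for every $X\neq 0$, so the inequality $\Phi(X)\leq \mu$ is equivalent to $tr(GX^T(\mu A-B)X)\geq 0$. Together with the compactness of the Stiefel manifold and continuity of $\Phi$, this yields
\[
v(GTRP)=\inf\left\{\mu:\, tr(GX^T(\mu A-B)X)\geq 0,~\forall X\in\Bbb R^{n\times p}\text{ with }X^TX=I_p\right\}.
\]
The condition inside the braces is precisely the negation of the existence of a feasible $X$ with $tr(GX^T(\mu A-B)X)<0$, i.e., exactly statement (i) of Theorem \ref{thmM} applied with $H=G\in\Bbb S^{p\times p}$ and $Q=\mu A-B\in\Bbb S^{n\times n}$.

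Next, I would invoke the equivalence (i)$\Leftrightarrow$(ii) of Theorem \ref{thmM}: the bracketed condition holds if and only if there exist $M\in\Bbb S^{p\times p}$ and $W\in\Bbb S^{n\times n}$ with $W\succeq 0$ such that
\[
tr(GX^T(\mu A-B)X)+tr\bigl(M(X^TX-I_p)\bigr)+tr\bigl(W(XX^T-I_n)\bigr)\geq 0,\quad \forall X\in\Bbb R^{n\times p}.
\]
Using the Kronecker-product identities (a)--(c) from the notation section, together with the vectorization already executed in (\ref{eqM6}) during the proof of Theorem \ref{thmM}, this polynomial-in-$X$ inequality is in turn equivalent to the pair of semidefinite-cum-linear constraints
\[
\mu G\otimes A-G\otimes B+M\otimes I_n+I_p\otimes W\succeq 0,\qquad tr(M)+tr(W)\leq 0.
\]

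Combining the two reformulations, the set of $\mu$ for which the constraint system on the right-hand side of (\ref{vGTRP}) admits some feasible $(M,W)$ coincides with the set of $\mu$ satisfying $\Phi(X)\leq\mu$ on the Stiefel manifold, namely $[v(GTRP),+\infty)$, and taking infima yields the identity. I do not expect a genuine obstacle here: the only substantive ingredient is Theorem \ref{thmM}, and the passage between the trace formulation and the Kronecker SDP is a mechanical rewriting (already done once in (\ref{eqM6})). The main point to verify carefully is the boundary case $\mu=v(GTRP)$, which is handled because the inequality in (ii) of Theorem \ref{thmM} is non-strict, so the infimum in (\ref{vGTRP}) is in fact attained.
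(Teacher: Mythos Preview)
Your proposal is correct and follows essentially the same approach as the paper: rewrite $v(GTRP)$ as the infimum over $\mu$ for which $\{tr(GX^T(\mu A-B)X)<0,\ X^TX=I_p\}$ is empty, apply the Matrix S-lemma (Theorem~\ref{thmM}) with $H=G$ and $Q=\mu A-B$, and then use the Kronecker-product vectorization (\ref{eqM6}) to obtain the SDP. Your explicit invocation of compactness/continuity and your remark on attainment at $\mu=v(GTRP)$ are slightly more careful than the paper's presentation, but the argument is otherwise identical.
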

\begin{proof}
\begin{eqnarray}
&&v(GTRP)\nonumber\\
&=&\inf\left\{\mu: \{X\in\Bbb R^{n\times p}:~\frac{tr(GX^TBX)}{tr(GX^TAX)}>\mu,~X^TX=I_p\}=\emptyset\right\}\nonumber\\
&=&\inf\left\{\mu: \left\{X\in\Bbb R^{n\times p}:~tr(GX^T(\mu A-B)X)<0,~X^TX=I_p\right\}=\emptyset\right\}\nonumber\\
&=&\inf\left\{\mu:~\exists M\in\Bbb S^{p\times p},~W(\in\Bbb S^{n\times n})\succeq 0~s.t.~\mu tr(GX^TAX)-tr(GX^TBX)\right.\nonumber\\
&&~~~~~~~~~~\left.+tr(M(X^TX-I_p))+tr(W(XX^T-I_n))\geq 0,~\forall~X\in\Bbb R^{n\times p} \right\}\label{eq21}\\
&=&\inf\left\{\mu:~\exists M\in\Bbb S^{p\times p},~W(\in\Bbb S^{n\times n})\succeq 0~s.t.~-tr(MI_p)-tr(WI_n)
\right.\nonumber\\
&&+vec(X)^T\left(\mu G\otimes A-G\otimes B+M\otimes I_n+I_p\otimes W\right)vec(X)\left.\geq 0,\forall X\in\Bbb R^{n\times p}\right\}\nonumber\\
&=&(\ref{vGTRP}),\nonumber
\end{eqnarray}
which is a semi-definite programming relaxation for {\rm (GTRP)}. 
(\ref{eq21}) holds due to the Matrix S-lemma (Theorem \ref{thmM}). The proof is complete.
\end{proof}

\begin{theorem}
There is no Lagrangian duality gap for {\rm (GRS)}.
\end{theorem}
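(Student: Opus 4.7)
The plan is to write down the Lagrangian dual of {\rm (GRS)} explicitly and to recognize it as the semidefinite program characterizing $v({\rm GTRP})$ in (\ref{vGTRP}). Since {\rm (GRS)} is just {\rm (GTRP)} with every constraint divided by the strictly positive $tr(GX^TAX)$ and with the redundant constraint $XX^T\preceq I_n$ appended (redundant by Lemma~\ref{lemma1}), one has $v({\rm GRS})=v({\rm GTRP})$, so it suffices to show that the Lagrangian dual value of {\rm (GRS)} also equals $v({\rm GTRP})$.

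First, I would introduce a symmetric multiplier $M\in\Bbb S^{p\times p}$ for the equality constraint and a multiplier $W\in\Bbb S^{n\times n}$ with $W\succeq 0$ for the LMI. Because every constraint in {\rm (GRS)} carries the common denominator $tr(GX^TAX)$, that denominator factors out of the Lagrangian, which collapses into the single fraction
$$L(X,M,W)=\frac{tr(GX^TBX)-tr(M(X^TX-I_p))-tr(W(XX^T-I_n))}{tr(GX^TAX)}.$$
Hence $\sup_{X\neq 0}L(X,M,W)\leq\mu$ is equivalent, after clearing the strictly positive denominator, to
$$\mu\,tr(GX^TAX)-tr(GX^TBX)+tr(M(X^TX-I_p))+tr(W(XX^T-I_n))\geq 0,\quad\forall X\in\Bbb R^{n\times p}.$$

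Using Kronecker identity (a) from the Notation section, this quadratic-plus-constant-in-$X$ inequality splits into the PSD condition $\mu G\otimes A-G\otimes B+M\otimes I_n+I_p\otimes W\succeq 0$ and the scalar inequality $tr(M)+tr(W)\leq 0$ (the latter coming from evaluating at $X=0$). This matches verbatim the feasible set appearing in (\ref{vGTRP}), so taking the infimum over admissible $(\mu,M,W)$ gives a dual value equal to $v({\rm GTRP})=v({\rm GRS})$, yielding the absence of a duality gap. The step that requires the most care, and which I view as the main obstacle, is the equivalence between ``quadratic-plus-constant $\geq 0$ for all $X$'' and the conjunction of the PSD condition with the scalar trace inequality; a homogenization $X\mapsto tX$ with $t\to\infty$ forces the PSD part while the value at $X=0$ isolates the constant part, and the converse direction is immediate. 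All the genuinely nontrivial work, namely producing the matrix multipliers from infeasibility of the strict quadratic inequality, has already been absorbed into the matrix S-lemma (Theorem~\ref{thmM}) and the preceding theorem, so the remaining task is only the careful rewriting of the scaled Lagrangian.
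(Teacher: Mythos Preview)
Your proposal is correct and follows essentially the same route as the paper: form the Lagrangian of {\rm (GRS)} with multipliers $M$ and $W\succeq 0$, exploit the common denominator $tr(GX^TAX)$ to collapse it into a single fraction, rewrite the condition $\sup_{X\neq 0}L\leq\mu$ via Kronecker identities as the pair ``PSD matrix $\succeq 0$'' plus ``$tr(M)+tr(W)\leq 0$'', and recognize the result as (\ref{vGTRP}). The only cosmetic slip is the phrase ``the value at $X=0$'' for isolating the constant term, since $X=0$ is excluded from the domain; replace this by the limit $X\mapsto tX$, $t\to 0^+$, and the argument is complete.
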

\begin{proof} 
The Lagrangian dual of (GRS) is:
\begin{eqnarray}
&&\inf_{M,W\succeq 0}\sup_{X\neq 0}\frac{tr(GX^TBX)-tr(M(X^TX-I_p))+tr(W(I_n-XX^T))}{tr(GX^TAX)}\nonumber\\
&=&\inf_{M,W\succeq 0}\sup_{X\neq 0}\frac{vec(X)^T(G\otimes B-M\otimes I_n-I_p\otimes W)vec(X)+tr(MI_p)+tr(WI_n)}{vec(X)^T(G\otimes A)vec(X)}\nonumber\\
&=&\inf_{M,W\succeq 0}\inf_{\mu\in\Bbb R}\left\{\mu:\right.\nonumber\\
&&~~\left.\mu\geq\frac{vec(X)^T(G\otimes B-M\otimes I_n-I_p\otimes W)vec(X)+tr(MI_p)+tr(WI_n)}{vec(X)^T(G\otimes A)vec(X)},\right.\nonumber\\
&&~~\left.\forall X\in\Bbb R^{n\times p},~X\neq 0\right\}\nonumber \\
&=&\inf_{M,W\succeq 0}~~\mu \nonumber\\
&&~~~s.t.~~vec(X)^T(G\otimes B-M\otimes I_n-I_p\otimes W)vec(X)+tr(M)+tr(W)\nonumber\\
&&~~~~~~~~~-\mu vec(X)^T(G\otimes A)vec(X)\leq 0,~~\forall X\in\Bbb R^{n\times p},~X\neq 0\nonumber\\
&=&\inf_{M,W\succeq 0}~~\mu\nonumber\\
&&~~~s.t.~~tr(M)+tr(W)\leq 0 \nonumber\\
&&~~~~~~~~~G\otimes B-M\otimes I_n-I_p\otimes W-\mu G\otimes A\preceq 0,\label{DGS}
\end{eqnarray}
where $M\in\Bbb S^{n\times n}$, $W(\in\Bbb S^{n\times n})\succeq 0$. Compare (\ref{DGS}) with (\ref{vGTRP}), we can see that the primal and dual values for {\rm (GRS)} are equal. The proof is complete.
\end{proof}
\section{A positive Lagrangian duality gap for {\rm (GTRP)}, {\rm (GR)} and {\rm (GS)}}
In this section, we study the Lagrangian duality gap for  {\rm (GTRP)}, {\rm (GR)} and {\rm (GS)}. Before we come to the main result, we recall the following two famous lemmas, which will be used in the following subsections.
\begin{lemma}[Rayleigh Quotient Theorem\cite{HJ1985}]\label{ray}
 Let $Q\in\Bbb S^{n\times n}$ and $\lambda_1\geq \lambda_2\geq...\geq \lambda_n$ be the eigenvalues of $Q$, $u_1,...,u_n$ be orthogonal and such that $Qu_i=\lambda_iu_i,~i=1,...,n$. Then $\lambda_1\geq x^TQx\geq \lambda_n$ for any unit vector $x\in\Bbb R^n$, with equality in the left-hand (respectively, right-hand) if and only if $Qx=\lambda_1x$ (respectively, $Qx=\lambda_nx$); moreover,
 \[
 \lambda_1=\sup_{x\neq 0}\frac{x^TQx}{x^Tx}(=\max_{x^Tx=1}x^TQx),~and~\lambda_{n}=\inf_{x\neq 0}\frac{x^TQx}{x^Tx}(=\min_{x^Tx=1}x^TQx).
 \]
 \end{lemma}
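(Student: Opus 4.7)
The plan is to diagonalize $Q$ and reduce the quadratic form to a convex combination of eigenvalues. Since $Q \in \Bbb S^{n\times n}$, the spectral theorem yields $Q = U\Lambda U^T$ where $U = (u_1, u_2, \ldots, u_n)$ is orthogonal and $\Lambda = Diag(\lambda_1, \ldots, \lambda_n)$. I would set up this decomposition explicitly at the start, since it is the engine driving every subsequent step.

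First, I would perform the change of variables $y = U^T x$. Because $U$ is orthogonal, $\|y\|_2 = \|x\|_2$, and a direct computation gives $x^T Q x = y^T \Lambda y = \sum_{i=1}^n \lambda_i y_i^2$. When $x$ is a unit vector, $y$ is too, so $\sum_{i=1}^n y_i^2 = 1$ with $y_i^2 \ge 0$. Hence $x^T Q x$ is a convex combination of $\lambda_1, \ldots, \lambda_n$, which immediately yields $\lambda_n \le x^T Q x \le \lambda_1$.

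For the equality characterization, I would argue as follows. Upper equality $x^T Q x = \lambda_1$ forces $y_i^2 = 0$ for every index $i$ with $\lambda_i < \lambda_1$, so $y$ is supported on the eigenspace of $\lambda_1$. Transforming back, $x = Uy$ lies in the $\lambda_1$-eigenspace of $Q$, i.e.\ $Qx = \lambda_1 x$. The converse is immediate by direct substitution. The lower equality case is symmetric, exchanging the roles of $\lambda_1$ and $\lambda_n$.

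Finally, for the $\sup$/$\inf$ formulas, I would exploit the fact that the Rayleigh quotient $(x^T Q x)/(x^T x)$ is invariant under the rescaling $x \mapsto tx$ for $t \ne 0$. So the supremum/infimum over all nonzero $x$ equals the corresponding extremum over unit vectors, which by the earlier steps equals $\lambda_1$ and $\lambda_n$ respectively; attainment follows by plugging in $x = u_1$ or $x = u_n$. I do not anticipate a genuine obstacle here: the only mildly delicate point is the equality characterization when $\lambda_1$ or $\lambda_n$ has multiplicity greater than one, but handling this just amounts to noting that the supported coordinates of $y$ need only lie in the corresponding multi-dimensional eigenspace, with $Qx = \lambda_1 x$ still following.
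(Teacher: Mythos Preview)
Your proof is correct and is the standard spectral-decomposition argument for the Rayleigh quotient theorem. Note, however, that the paper does not actually prove this lemma: it is stated with a citation to Horn and Johnson \cite{HJ1985} and recalled as a known result, so there is no in-paper proof to compare against.
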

\begin{lemma}[\cite{FN2006,FBR2987,Xia2011}]\label{Xia2011}
Let $H\in\Bbb S^{n\times n}$, $Q\in\Bbb S^{n\times n}$, $\lambda_1\geq\lambda_2\geq...\geq \lambda_n,~\sigma_1\geq\sigma_2\geq...\geq\sigma_n$ be the eigenvalues of $H$ and $Q$, respectively. For any $X\in\Bbb R^{n\times n}$ satisfying $X^TX=I_n$, it follows that 
\begin{eqnarray*}
\sum_{i=1}^{n}\lambda_i\sigma_{n-i}\leq tr(HX^TQX)\leq\sum_{i=1}^{n}\lambda_i\sigma_i.
\end{eqnarray*}
\end{lemma}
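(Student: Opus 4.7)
The plan is to exploit the fact that $X^TX=I_n$ with $X$ square forces $X$ to be orthogonal, so that $\widetilde{Q}:=X^TQX$ is orthogonally similar to $Q$ and therefore shares its spectrum $\sigma_1,\ldots,\sigma_n$. Bounding $tr(HX^TQX)$ thus reduces to bounding $tr(H\widetilde{Q})$ over all symmetric matrices with the prescribed eigenvalues. I would diagonalize $H=UDiag(\lambda)U^T$ and $\widetilde{Q}=VDiag(\sigma)V^T$ with orthogonal $U,V$, then set $W:=U^TV$ (also orthogonal). Using the cyclic property of the trace,
$$
tr(H\widetilde{Q})=tr\bigl(Diag(\lambda)\,W\,Diag(\sigma)\,W^T\bigr)=\sum_{i=1}^n\sum_{j=1}^n \lambda_i\sigma_j W_{ij}^2.
$$

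Next, I would define $P_{ij}:=W_{ij}^2$. Because $W$ is orthogonal, every row and every column has unit norm, so $\sum_j P_{ij}=\sum_i P_{ij}=1$ and $P_{ij}\geq 0$; hence $P$ is doubly stochastic. Applying the Birkhoff--von Neumann theorem (already invoked in the proof of Lemma \ref{lemma4}), $P$ is a convex combination of permutation matrices, so the linear functional $\sum_{i,j}\lambda_i\sigma_j P_{ij}$ attains its extrema at a vertex of the Birkhoff polytope, giving a value of the form $\sum_{i=1}^n \lambda_i\sigma_{\pi(i)}$ for some permutation $\pi$.

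Finally, the classical rearrangement inequality identifies the extremal permutations: $\sum_i \lambda_i\sigma_{\pi(i)}$ is maximized when both sequences are sorted in the same (decreasing) order, yielding the upper bound $\sum_{i=1}^n \lambda_i\sigma_i$, and minimized when they are sorted in opposite orders, yielding the reversal sum that matches the claimed lower bound (up to the indexing convention of the statement). Putting these together gives both inequalities simultaneously.

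The proof is essentially a chaining of three standard ingredients: the orthogonal-similarity reduction, Birkhoff--von Neumann, and the rearrangement inequality. The only step requiring any verification---and it is not really an obstacle---is checking that $(W_{ij}^2)$ is doubly stochastic, which is immediate from orthogonality of $W$. The main conceptual move is recognizing that the trace inner product of two symmetric matrices with fixed spectra can be recast as a linear program over the doubly stochastic polytope, after which the extrema are forced onto permutations and pinned down by rearrangement.
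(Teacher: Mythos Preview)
Your argument is correct and is in fact the standard proof of this classical inequality: the orthogonal-similarity reduction, the doubly stochastic matrix $(W_{ij}^2)$, Birkhoff--von Neumann, and the rearrangement inequality combine exactly as you describe. There is nothing to compare against, however, because the paper does not prove Lemma~\ref{Xia2011} at all; it simply quotes the result from \cite{FN2006,FBR2987,Xia2011} and moves on, so your write-up actually supplies what the paper omits (and your parenthetical about the indexing of the lower bound is apt---the intended expression is $\sum_{i=1}^n\lambda_i\sigma_{n+1-i}$).
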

Based on Lemma \ref{ray}, we further obtain the following lemma. We give a proof to improve the readability.
\begin{lemma}[\cite{HJ1985}]\label{lemma7}
Let $B\in\Bbb S^{n\times n}$, $G(\in\Bbb S^{p\times p})\succ 0$, $A(\in\Bbb S^{n\times n})\succ 0$, $X\in\Bbb R^{n\times p}$, $X\neq 0$. Define
\[
R(vec(X))=\frac{vec(X)^T(G\otimes B)vec(X)}{vec(X)^T(G\otimes A)vec(X)}.
\]
Then $\sup\limits_{X\in\Bbb R^{n\times p},X\neq 0} R(vec(X))=\lambda_{max}(A^{-1}B)$.
\end{lemma}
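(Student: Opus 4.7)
The plan is to reduce $R(vec(X))$ to a standard Rayleigh quotient in a new variable and then read off the answer as the largest eigenvalue of an explicit matrix, using Lemma \ref{ray} together with the Kronecker product identities (a)--(c) recalled in the Notation.

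First I would observe that $G\otimes A\succ 0$, since both $G$ and $A$ are positive definite. Hence the symmetric square root $G^{1/2}\otimes A^{1/2}=(G\otimes A)^{1/2}$ is invertible. I would set $z=(G^{1/2}\otimes A^{1/2})vec(X)$, which defines a bijection between $\mathbb{R}^{np}\setminus\{0\}$ and itself, and rewrite
\[
R(vec(X))=\frac{z^T(G^{-1/2}\otimes A^{-1/2})(G\otimes B)(G^{-1/2}\otimes A^{-1/2})z}{z^Tz}.
\]
Applying property (c) of the Kronecker product twice, the middle matrix simplifies to $(G^{-1/2}GG^{-1/2})\otimes(A^{-1/2}BA^{-1/2})=I_p\otimes(A^{-1/2}BA^{-1/2})$. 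The supremum over $X\neq 0$ then becomes the supremum of a standard Rayleigh quotient over $z\neq 0$.

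Next I would invoke Lemma \ref{ray} to conclude that this supremum equals $\lambda_{\max}\!\left(I_p\otimes (A^{-1/2}BA^{-1/2})\right)$. Since the spectrum of a Kronecker product is the set of products of eigenvalues, and the eigenvalues of $I_p$ are all $1$, this equals $\lambda_{\max}(A^{-1/2}BA^{-1/2})$. Finally, the similarity $A^{-1}B=A^{-1/2}(A^{-1/2}BA^{-1/2})A^{1/2}$ shows that $A^{-1}B$ and $A^{-1/2}BA^{-1/2}$ have the same eigenvalues, so the supremum equals $\lambda_{\max}(A^{-1}B)$, as required.

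There is no substantial obstacle here; the only points requiring a little care are checking that the change of variables is a bijection on $\mathbb{R}^{np}\setminus\{0\}$ (immediate from invertibility of $G^{1/2}\otimes A^{1/2}$) and that $\lambda_{\max}(A^{-1/2}BA^{-1/2})=\lambda_{\max}(A^{-1}B)$ (immediate from similarity). Everything else is a direct application of the Kronecker product identities already listed and of the classical Rayleigh quotient theorem.
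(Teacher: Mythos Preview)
Your proof is correct and follows essentially the same approach as the paper: both substitute $z=(G\otimes A)^{1/2}vec(X)$ to reduce $R$ to a standard Rayleigh quotient and then invoke Lemma~\ref{ray}. The only cosmetic difference is that you simplify the middle matrix directly via the factorization $(G\otimes A)^{1/2}=G^{1/2}\otimes A^{1/2}$ to obtain $I_p\otimes(A^{-1/2}BA^{-1/2})$ and finish by similarity, whereas the paper manipulates the secular equation to reach $I_p\otimes(A^{-1}B)$; these are equivalent computations.
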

\begin{proof}
Let $y=(G\otimes A)^{\frac{1}{2}}vec(X)$, where $(G\otimes A)^{\frac{1}{2}}$ is the square root of the positive definite matrix $G\otimes A$. Then we can insert $vec(X)=(G\otimes A)^{-\frac{1}{2}}y$ into $R(vec(X))$ and obtain
$
R(y)=\frac{y^TSy}{y^Ty},~S=((G\otimes A)^{-\frac{1}{2}})^T(G\otimes B)(G\otimes A)^{-\frac{1}{2}}.
$  

Then according to Lemma \ref{ray}, we know that the maximum value of $R(y)$ is $\lambda_{\max}(S)$. Denote the optimal solution by $\tilde{y}$. It holds that $S\tilde{y}=\lambda_{\max}(S)\tilde{y}$, where $\tilde{y}$ is the eigenvector corresponding to $\lambda_{\max}(S)$.
%

Consider the secular function
\[
((G\otimes A)^{-\frac{1}{2}})^T(G\otimes B)(G\otimes A)^{-\frac{1}{2}}y=\lambda y,
\]
on whose left-hand side multiplied by $(G\otimes A)^{-\frac{1}{2}}$, we obtain
\[
((G\otimes A)^{-1})^T(G\otimes B)(G\otimes A)^{-\frac{1}{2}}y=\lambda(G\otimes A)^{-\frac{1}{2}}y,
\]
Since $vec(X)=(G\otimes A)^{-\frac{1}{2}}y$, the above equation can be further written as 
\[((G\otimes A)^{-1})^T(G\otimes B)vec(X)=\lambda vec(X).\]
 So in order to find the largest eigenvalue of the matrix $S$ and the corresponding eigenvector $\tilde{y}$, we can turn to find the largest eigenvalue of the matrix 
 \[
 ((G\otimes A)^{-1})^T(G\otimes B)= (G^{-1}\otimes A^{-1})(G\otimes B)=(G^{-1}G)\otimes(A^{-1}B)=I_p\otimes(A^{-1}B),
 \] 
 which corresponds to the Rayleigh quotient problem:
 \[
 \sup_{vec(X)\neq 0}\frac{vec(X)^T(I_p\otimes(A^{-1}B))vec(X)}{vec(X)^Tvec(X)},
 \]
whose optimal value is $\lambda_{\max}(I_p\otimes(A^{-1}B))=\lambda_{\max}(A^{-1}B)$ due to Lemma \ref{ray}
. The proof is complete.
\end{proof}
\subsection{A positive Lagrangian duality gap for {\rm (GTRP)} and {\rm (GR)}}
In this subsection, we show that {\rm (GTRP)}, or even {\rm (GR)}, which is obtained by adding a redundant constraint to {\rm (GTRP)}, could have a positive Lagrangian duality gap. Unlike the effect on {\rm (GTP)} \cite{FN2006}, the operation of adding a redundant constraint has no effect in reducing the duality gap at all.
\begin{theorem}\label{thDGTRP}
  The optimal value of the Lagrangian dual of {\rm (GTRP)} is $\lambda_{\max}(A^{-1}B)$.
\end{theorem}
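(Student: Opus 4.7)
The plan is to evaluate the Lagrangian dual of {\rm (GTRP)} directly and sandwich its optimal value between $\lambda_{\max}(A^{-1}B)$ from above and below. Following the convention used above for {\rm (GRS)}, I would write the dual as
\[
d^{*}\;=\;\inf_{M\in\Bbb S^{p\times p}}\ \sup_{X\neq 0}\ \frac{tr(GX^{T}BX)-tr(M(X^{T}X-I_{p}))}{tr(GX^{T}AX)},
\]
which differs from the {\rm (GRS)} dual only in that the redundant constraint $XX^{T}\preceq I_{n}$ (and the associated multiplier $W$) is absent.

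For the upper bound, I would take $M=0$: the inner supremum reduces to $\sup_{X\neq 0}\frac{tr(GX^{T}BX)}{tr(GX^{T}AX)}$, which equals $\lambda_{\max}(A^{-1}B)$ by Lemma~\ref{lemma7}. Hence $d^{*}\le\lambda_{\max}(A^{-1}B)$.

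For the matching lower bound, I would show that for every $M\in\Bbb S^{p\times p}$ the inner supremum is at least $\lambda^{*}:=\lambda_{\max}(A^{-1}B)$. If $tr(M)>0$, scale $X=tX_{0}$ with $t\to 0^{+}$: the numerator tends to $tr(M)>0$ and the denominator tends to $0^{+}$, so the supremum is $+\infty$. If $tr(M)\le 0$, then $M$ has at least one non-positive eigenvalue $m_{0}$ (because $\sum_{i}m_{i}=tr(M)\le 0$); let $u_{0}\in\Bbb R^{p}$ be a corresponding unit eigenvector, and let $v^{*}\in\Bbb R^{n}$ satisfy $v^{*T}Bv^{*}=\lambda^{*}v^{*T}Av^{*}$ (the generalized eigenvector attaining the Rayleigh quotient from Lemma~\ref{lemma7}). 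I would test the rank-one matrix $X=t\,v^{*}u_{0}^{T}$; a direct computation gives
\[
\frac{tr(GX^{T}BX)-tr(M(X^{T}X-I_{p}))}{tr(GX^{T}AX)}=\lambda^{*}+\frac{tr(M)}{t^{2}(v^{*T}Av^{*})(u_{0}^{T}Gu_{0})}-\frac{\|v^{*}\|^{2}m_{0}}{(v^{*T}Av^{*})(u_{0}^{T}Gu_{0})},
\]
which as $t\to\infty$ tends to $\lambda^{*}-\frac{\|v^{*}\|^{2}m_{0}}{(v^{*T}Av^{*})(u_{0}^{T}Gu_{0})}\ge\lambda^{*}$, since $m_{0}\le 0$. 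Combining the two bounds yields $d^{*}=\lambda_{\max}(A^{-1}B)$.

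The main obstacle is the lower bound, specifically finding a test direction that works uniformly in $M$: the rank-one ansatz $X=tv^{*}u_{0}^{T}$ is chosen precisely so that the $n$-side and $p$-side decouple, allowing the generalized Rayleigh quotient to be saturated in $v$ while the sign of $u^{T}Mu$ is controlled in $u$. Once this ansatz is in place, taking $t\to\infty$ kills the awkward $tr(M)$ term and the remaining calculation is routine.
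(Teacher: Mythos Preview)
Your formulation of the Lagrangian dual is not the one the paper uses for {\rm (GTRP)}. Because {\rm (GTRP)} carries the \emph{unscaled} constraint $X^{T}X=I_{p}$, its Lagrangian dual is the additive one,
\[
\inf_{M\in\Bbb S^{p\times p}}\ \sup_{X\neq 0}\ \frac{tr(GX^{T}BX)}{tr(GX^{T}AX)}+tr\bigl(M(X^{T}X-I_{p})\bigr),
\]
whereas what you wrote---with the multiplier term placed inside the numerator---is exactly the Lagrangian dual of the \emph{scaled} reformulation {\rm (GS)}, whose constraint reads $\frac{X^{T}X}{tr(GX^{T}AX)}=\frac{I_{p}}{tr(GX^{T}AX)}$. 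The point of Section~3 is precisely to distinguish the duals of {\rm (GTRP)}, {\rm (GR)}, {\rm (GS)}, {\rm (GRS)}; the convention you borrowed from the {\rm (GRS)} computation is valid there only because {\rm (GRS)} already has scaled constraints.

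Your argument is internally correct and in fact shows that the dual of {\rm (GS)} also equals $\lambda_{\max}(A^{-1}B)$ (consistent with Example~2, where ${\rm v(DGS1)}=3=\lambda_{\max}(B)$), but it does not prove the theorem as stated. For the additive dual the paper exploits scale invariance of the ratio differently: for fixed $M$, if $M\not\preceq 0$ one sends $tr(MX^{T}X)\to+\infty$; if $M\preceq 0$ one lets $\|X\|\to 0$ so that $tr(MX^{T}X)\to 0$ and the inner supremum collapses to $\sup_{X\neq 0}\frac{tr(GX^{T}BX)}{tr(GX^{T}AX)}-tr(MI_{p})=\lambda_{\max}(A^{-1}B)-tr(MI_{p})$ by Lemma~\ref{lemma7}, after which the outer infimum over $M\preceq 0$ is attained at $M=0$. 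Your rank-one test direction with $t\to\infty$ is tailored to the scaled dual and is neither needed nor directly applicable to the additive one.
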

\begin{proof}
The Lagrangian dual of {\rm (GTRP)} is:
\begin{eqnarray}
&&\inf_{M}\sup_{X\neq 0}\frac{tr(GX^TBX)}{tr(GX^TAX)}+trM(X^TX-I_p)\nonumber\\
&=&\inf_{M} 
\begin{aligned}&\left\{\begin{array}{l}     
\sup\limits_{X\neq 0}\frac{tr(GX^TBX)}{tr(GX^TAX)}-tr(MI_p)~~~~M\preceq 0,~~~~~~~~~~~~~~~\\             
+\infty~~~~~~~~~~~~~~~~~~~~~~~~~~~~~~~~M\npreceq 0
\end{array}\right.     
\end{aligned}\label{eq23}\\
&=&\inf_{M\preceq 0}\sup_{X\neq 0}\frac{tr(GX^TBX)}{tr(GX^TAX)}-tr(MI_p)\nonumber\\
&=&\inf_{M\preceq 0}\sup_{X\neq 0}\frac{vec(X)^T(G\otimes B)vec(X)}{vec(X)^T(G\otimes A)vec(X)}-tr(MI_p)\label{eq37}\\
&=&\inf_{M\preceq 0}\lambda_{\max}\left(A^{-1}B\right)-tr(MI_p)\label{eq36}\\
&=&\lambda_{\max}(A^{-1}B).\nonumber
\end{eqnarray}
where (\ref{eq23}) holds due to the fact that $\frac{tr(GX^TBX)}{tr(GX^TAX)}$ is homogeneous with respect to the variable $X$, then $trMX^TX$ approaches zero at the optimal solution. In other words, denote
\begin{eqnarray*}
&&f(X)=\frac{tr(GX^TBX)}{tr(GX^TAX)}+trM(X^TX-I_p),\\
&&g(X)=\frac{tr(GX^TBX)}{tr(GX^TAX)}-tr(MI_p).
\end{eqnarray*}
Assume that $X^*$ is the optimal solution of $\sup_{X\neq 0}g(X)$ with $M\preceq 0$. Then $f(X)\leq g(X)\leq g(X^*)$, where the first inequality holds since $trMX^TX\leq 0$. Let $Y=tX^*,~t\rightarrow 0$. Then $\lim_{t\rightarrow 0}f(Y)=\lim_{t\rightarrow 0}f(tX^*)=g(X^*)$. As a result, $\sup_{X\neq 0}f(X)=g(X^*)$, the equation (\ref{eq23}) holds.
(\ref{eq36}) holds due to Lemma \ref{lemma7}. The proof is complete.
%
%
\end{proof}
\begin{theorem}\label{dGR}
  The optimal value of the Lagrangian dual of {\rm (GR)} is $\lambda_{\max}(A^{-1}B)$.
\end{theorem}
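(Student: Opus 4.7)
The plan is to compute the Lagrangian dual of (GR) directly, reduce the inner supremum in closed form by exploiting homogeneity exactly as in the proof of Theorem~\ref{thDGTRP}, and then solve the remaining outer semidefinite minimization using the standing dimension assumption $p\leq n$.

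First, I would introduce multipliers $M\in\Bbb S^{p\times p}$ for $X^TX-I_p=0$ and $W(\in\Bbb S^{n\times n})\succeq 0$ for $XX^T-I_n\preceq 0$, so that the Lagrangian dual reads
\[
\inf_{M,\,W\succeq 0}\sup_{X\neq 0}\frac{tr(GX^TBX)}{tr(GX^TAX)}+tr(M(X^TX-I_p))+tr(W(I_n-XX^T)).
\]
Expanding and using the Kronecker identities recalled in the notation section, the $X$-quadratic portion $tr(MX^TX)-tr(WXX^T)$ rewrites as $vec(X)^T(M\otimes I_n-I_p\otimes W)vec(X)$, while the $X$-independent part contributes $tr(W)-tr(M)$.

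Next I would apply the same homogeneity trick used in Theorem~\ref{thDGTRP}: the fraction is $0$-homogeneous in $X$ whereas the bracketed quadratic is $2$-homogeneous. Hence if $M\otimes I_n-I_p\otimes W\not\preceq 0$, any direction $X_0$ producing positive quadratic value, scaled as $tX_0$ with $t\to\infty$, sends the inner sup to $+\infty$. Otherwise, taking $X=tX^\ast$ with $X^\ast$ a maximizer of the fraction and letting $t\to 0$ annihilates the quadratic term while keeping the fraction constant, so by Lemma~\ref{lemma7} the inner sup equals $\lambda_{\max}(A^{-1}B)-tr(M)+tr(W)$. The dual therefore collapses to
\[
\lambda_{\max}(A^{-1}B)+\inf\bigl\{tr(W)-tr(M):\,W\succeq 0,\;I_p\otimes W-M\otimes I_n\succeq 0\bigr\}.
\]

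The final, and expected main, step is to show that this residual infimum equals $0$. The value $0$ is attained at $M=0,\,W=0$, which is trivially feasible and gives the upper bound. For the lower bound, I would take the trace of the matrix inequality $I_p\otimes W-M\otimes I_n\succeq 0$ to obtain $p\cdot tr(W)-n\cdot tr(M)\geq 0$, i.e., $tr(M)\leq(p/n)tr(W)$. Since $W\succeq 0$ forces $tr(W)\geq 0$ and since the standing assumption $p\leq n$ gives $p/n\leq 1$, it follows that $tr(W)-tr(M)\geq(1-p/n)tr(W)\geq 0$. Combining both directions yields the claimed dual value $\lambda_{\max}(A^{-1}B)$; in particular, adding the seemingly helpful constraint $XX^T\preceq I_n$ to (GTRP) does not narrow the Lagrangian duality gap at all, which is the point the theorem is making.
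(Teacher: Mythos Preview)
Your proposal is correct and follows essentially the same route as the paper: the same multipliers, the same homogeneity reduction of the inner supremum (using Lemma~\ref{lemma7}), and the same trace argument on $I_p\otimes W-M\otimes I_n\succeq 0$ combined with $p\leq n$ and $W\succeq 0$ to pin the residual infimum at $0$, attained at $M=W=0$.
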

\begin{proof}
The Lagrangian dual of {\rm (GR)} is:
\begin{eqnarray}
&&\inf_{M,W\succeq 0}\sup_{X\neq 0}\frac{tr(GX^TBX)}{tr(GX^TAX)}+trM(X^TX-I_p)+trW(I_n-XX^T)\nonumber\\
&=&\inf_{M,W\succeq 0}\sup_{X\neq 0}\frac{tr(GX^TBX)}{tr(GX^TAX)}\nonumber\\
&&~~~~~~~~~~~~~~~~~+vec(X)^T(M\otimes I_n-I_p\otimes W)vec(X)-trMI_p+trWI_n\nonumber\\
&=&\inf_{M,W\succeq 0} 
\begin{aligned}&\left\{\begin{array}{l}     
\sup\limits_{X\neq 0}\frac{tr(GX^TBX)}{tr(GX^TAX)}-tr(MI_p)+tr(WI_n),~M\otimes I_n-I_p\otimes W\preceq 0,\\             
+\infty~~~~~~~~~~~~~~~~~~~~~~~~~~~~~~~~~~~~~~~~~~~~~~M\otimes I_n-I_p\otimes W\npreceq 0
\end{array}\right.     
\end{aligned}\label{eq22}\\
&=&\inf_{M,~W\succeq 0,~M\otimes I_n-I_p\otimes W\preceq 0}\sup_{X\neq 0}\frac{tr(GX^TBX)}{tr(GX^TAX)}-tr(MI_p)+tr(WI_n)~\nonumber\\
&=&\inf_{M,~W\succeq 0,~M\otimes I_n-I_p\otimes W\preceq 0}\sup_{X\neq 0}\frac{vec(X)^T(G\otimes B)vec(X)}{vec(X)^T(G\otimes A)vec(X)}-tr(MI_p)+tr(WI_n)~\nonumber\\
&=&\inf_{M,~W\succeq 0,~M\otimes I_n-I_p\otimes W\preceq 0}\lambda_{\max}\left(A^{-1}B\right)-tr(MI_p)+tr(WI_n)\label{eq38}\\
&\geq&\inf_{W\succeq 0}\lambda_{\max}\left(A^{-1}B\right)-\frac{p}{n}trW+trW\label{eq33}\\
&=&\lambda_{\max}(A^{-1}B)\label{eq34}
\end{eqnarray}
where (\ref{eq22}) holds due to the same reason as in (\ref{eq23}). (\ref{eq38}) holds due to Lemma \ref{lemma7}. (\ref{eq33}) holds since $M\otimes I_n-I_p\otimes W\preceq 0$ implies that $ntrM-ptrW\leq 0$, then $-trM\geq-(ptrW)/n$, and hence we have $\inf -tr(MI_p)=-(ptrW)/n$. (\ref{eq34}) holds since $p\leq n$ implies that $1-p/n\geq 0$, together with $W\succeq 0$, we have $\inf (1-\frac{p}{n})trW=0$. The Lagrangian dual of {\rm (GR)} achieves $\lambda_{\max}(A^{-1}B)$ when $W=M=0$. The proof is complete.
\end{proof}

We show in the following example that there is no Lagrangian duality gap for a special case of {\rm (GTRP)}.
\begin{example}\label{ex1}
Consider a special case of {\rm (GTRP)} that $p=1$, then we get
\begin{eqnarray*}
{\rm(GRQ1)}~&\max& \frac{x^TBx}{x^TAx}\\
~~~~~~~~~~~&{\rm s.t.}& x^Tx=1.
\end{eqnarray*}
According to Theorem \ref{thDGTRP}, we know that the optimal value of the Lagrangian dual of {\rm (GRQ1)} is $\lambda_{\max}(A^{-1}B)$. 

Next we consider the primal problem {\rm (GRQ1)}.
According to Lemma \ref{lemma7} with $p=1$, 
 we know that the optimal value of the generalized Rayleigh quotient problem $\max\limits_{x\neq 0}~\frac{x^TBx}{x^TAx}$ is $\lambda_{\max}(A^{-1}B)$, and the optimal solution $x^*$ is any eigenvector corresponding to $\lambda_{\max}(A^{-1}B)$. Furthermore, we can normalize $x^*$, i.e., without loss of generality, we can set $x^*$ to be the corresponding unit eigenvector, i.e., $x^{*T}x^*=1$. We can see that $\max\limits_{x\neq 0}~\frac{x^TBx}{x^TAx}$ is equivalent to {\rm (GRQ1)}, which means that $v{\rm (GRQ1)}=\lambda_{\max}(A^{-1}B)$.   
\end{example}

As discussed above, we can conclude that strong Lagrangian duality holds for {\rm (GRQ1)}, i.e., the optimal value of {\rm (GRQ1)} is equal to the optimal value of its Lagrangian dual problem. However, does strong Lagrangian duality hold for the general case of {\rm (GTRP)} or {\rm (GR)}?

\begin{theorem}\label{th6}
There is no Lagrangian duality gap of {\rm (GTRP)} or {\rm (GR)} if and only if any unit eigenvectors corresponding to $\lambda_{\max}(I_p\otimes(A^{-1}B))$, denoted by $vec(\hat{X})$, satisfy $\hat{X}^T\hat{X}=\frac{1}{p}I_p$.
%
\end{theorem}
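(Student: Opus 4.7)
The plan is to reduce the duality-gap question to a spectral attainment problem and then match that attainment with the normalization described in the theorem. By Theorems~\ref{thDGTRP} and~\ref{dGR}, the Lagrangian dual values of both {\rm (GTRP)} and {\rm (GR)} equal $\lambda_{\max}(A^{-1}B)$. Moreover, Lemma~\ref{lemma1} shows that the constraint $XX^T\preceq I_n$ is automatically satisfied once $X^TX=I_p$, so {\rm (GTRP)} and {\rm (GR)} share the same feasible set and hence the same primal value. Combined with Lemma~\ref{lemma7}, which gives $\sup_{X\neq 0}\tfrac{tr(GX^TBX)}{tr(GX^TAX)}=\lambda_{\max}(A^{-1}B)$, this yields $v{\rm (GTRP)}\le\lambda_{\max}(A^{-1}B)$, and the duality gap vanishes iff this bound is saturated. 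Because the Stiefel manifold is compact and the denominator is bounded away from zero there, $v{\rm (GTRP)}$ is actually attained.

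Next I would invoke the optimality characterization implicit in the proof of Lemma~\ref{lemma7}: the maximizers of the generalized Rayleigh quotient $R(vec(X))=\tfrac{vec(X)^T(G\otimes B)vec(X)}{vec(X)^T(G\otimes A)vec(X)}$ are precisely the vectors $vec(X)$ solving the pencil problem $(G\otimes B)vec(X)=\lambda_{\max}(A^{-1}B)(G\otimes A)vec(X)$. Pre-multiplying by $(G\otimes A)^{-1}=G^{-1}\otimes A^{-1}$ and applying the mixed-product property of Kronecker products reduces this to the ordinary eigenvalue equation $(I_p\otimes(A^{-1}B))vec(X)=\lambda_{\max}(A^{-1}B)\,vec(X)$. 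Thus the gap is zero if and only if some eigenvector of $I_p\otimes(A^{-1}B)$ for $\lambda_{\max}(A^{-1}B)$ is (up to nonzero scalar) a Stiefel-feasible matrix.

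The last step is a homogeneity/normalization conversion in both directions. From a Stiefel-feasible maximizer $X^*$ with $X^{*T}X^*=I_p$, one has $\|vec(X^*)\|^2=tr(X^{*T}X^*)=p$; setting $\hat X=X^*/\sqrt{p}$ therefore produces a unit eigenvector of $I_p\otimes(A^{-1}B)$ for $\lambda_{\max}(A^{-1}B)$ that satisfies $\hat X^T\hat X=\tfrac{1}{p}I_p$. Conversely, starting from a unit eigenvector $\hat X$ with $\hat X^T\hat X=\tfrac{1}{p}I_p$, the matrix $\sqrt{p}\,\hat X$ is Stiefel-feasible and, by the scale invariance of $R$, still attains the eigenvalue $\lambda_{\max}(A^{-1}B)$. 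The two implications together deliver the claimed equivalence, and the argument for {\rm (GR)} is identical since its feasible set coincides with that of {\rm (GTRP)}.

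The main obstacle is to keep the two incompatible scalings aligned: the theorem normalizes $vec(\hat X)$ to have Euclidean norm one, whereas the Stiefel constraint forces $tr(X^TX)=p$. The condition $\hat X^T\hat X=\tfrac{1}{p}I_p$ is precisely the resolution of this mismatch, as it enforces that the unit-vector $vec(\hat X)$, when rescaled by $\sqrt{p}$, lands exactly on the Stiefel manifold; once this bookkeeping is made explicit, both directions of the equivalence are routine consequences of the homogeneity of $R$ and the spectral characterization from Lemma~\ref{lemma7}.
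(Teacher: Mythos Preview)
Your approach coincides with the paper's: reduce the question to whether the unconstrained generalized Rayleigh quotient attains its supremum $\lambda_{\max}(A^{-1}B)$ on the Stiefel manifold, then use the $\sqrt{p}$-rescaling to pass between unit eigenvectors of $I_p\otimes(A^{-1}B)$ and Stiefel-feasible matrices. Your write-up is in fact more complete than the paper's own proof, which only spells out the sufficiency direction (``as long as $\hat X^T\hat X=\tfrac1p I_p$, there is no gap'') and leaves necessity implicit.

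One remark worth making explicit: both you and the paper are effectively proving the statement with ``any'' read as ``some'' (i.e., \emph{there exists} a unit eigenvector with $\hat X^T\hat X=\tfrac1p I_p$). Read literally as ``every'', the forward implication would fail whenever $p\ge 2$: the top eigenspace of $I_p\otimes(A^{-1}B)$ always contains unit vectors $vec(\hat X)$ with $\hat X$ of rank one (place a top eigenvector of $A^{-1}B$ in a single column and zeros elsewhere), for which $\hat X^T\hat X$ has rank one and cannot equal $\tfrac1p I_p$. The existential reading is the one your argument (and the paper's) supports.
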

\begin{proof}
According to Theorem \ref{thDGTRP} and \ref{dGR}, the optimal value of the Lagrangian dual of {\rm (GTRP)} and {\rm (GR)} is $\lambda_{\max}(I_p\otimes A^{-1}B)$, which corresponds to the generalized Rayleigh quotient problem:
\begin{eqnarray*}
&&\sup\limits_{X\in\Bbb R^{n\times p},X\neq 0}\frac{tr(GX^TBX)}{tr(GX^TAX)}
\bigg(=\sup\limits_{X\in\Bbb R^{n\times p},X\neq 0}\frac{vec(X)^T\left(G\otimes B\right)vec(X)}{vec(X)^T\left(G\otimes A\right)vec(X)}\\
&&~~~~~~~~~~~~~~~~~~~~~~~~~~~~~~~~~~~=\lambda_{\max}(I_p\otimes (A^{-1}B))=\lambda_{\max}(A^{-1}B)\bigg).
\end{eqnarray*}
The Lagrangian duality gap of {\rm (GTRP)} or {\rm (GR)} is:
\begin{eqnarray*}
v_{gap}=\left|\max\limits_{X^TX=I_p} \frac{tr(GX^TBX)}{tr(GX^TAX)}-\lambda_{\max}(A^{-1}B)\right|\geq0,
\end{eqnarray*}
where $v_{gap}=0$ holds when \[\max\limits_{X^TX=I_p}\frac{tr(GX^TBX)}{tr(GX^TAX)}=\lambda_{\max}(A^{-1}B).\]  
Denote any unit eigenvector corresponding to $\lambda_{\max}(I_p\otimes A^{-1}B)$ by $vec(\hat{X})$, i.e. $vec(\hat{X})^Tvec(\hat{X})=1(=tr(\hat{X}^T\hat{X}))$. 
Then there exists $\tilde{X}=\sqrt{p}\hat{X}$ satisfying $vec(\tilde{X})^Tvec(\tilde{X})=p$. 
Consequently, as long as $\hat{X}^T\hat{X}=\frac{1}{p}I_p$, (i.e., $\tilde{X}^T\tilde{X}=I_p$), there is no Lagrangian duality gap for {\rm (GTRP)} and {\rm (GR)}. The proof is complete. 
\end{proof}

To sum up, according to Theorem \ref{th6}, we can see that both {\rm (GTRP)}
and {\rm (GR)} may have a duality gap. And according to Theorems \ref{thDGTRP} and \ref{dGR}, we can see that the operation of only adding a redundant constraint to {\rm (GTRP)} (and obtain {\rm (GR)}) has no effect on closing the duality gap at all.

\subsection{An example for {\rm (GS)} with positive Lagrangian duality gap}
In this subsection, we show that {\rm (GS)}, which is a well-scaled version of {\rm (GTRP)}:
\begin{eqnarray*}
{\rm(GS)}~&\max& \frac{tr(GX^TBX)}{tr(GX^TAX)}\\
~~~~~~~~~~~&{\rm s.t.}& \frac{X^TX}{tr(GX^TAX)}=\frac{I_p}{tr(GX^TAX)},
\end{eqnarray*}
also has Lagrangian duality gap by an example. We first derive the Lagrangian dual of {\rm (GS)} and denote it as {\rm (DGS)}:
\begin{eqnarray}
{\rm (DGS)}&&\inf_{S}\sup_{X\neq 0}\frac{tr(GX^TBX)}{tr(GX^TAX)}+\frac{tr(S(X^TX-I_p))}{tr(GX^TAX)}\nonumber\\
&=&\inf_{S}\sup_{X\neq 0}\frac{vec(X)^T(G\otimes B+S\otimes I_n)vec(X)-tr(S)}{vec(X)^T(G\otimes A)vec(X)}\nonumber\\
&=&\inf_{S}\left\{\inf_{\rho\in\Bbb R} \rho: \rho\geq \frac{vec(X)^T(G\otimes B+S\otimes I_n)vec(X)-tr(S)}{vec(X)^T(G\otimes A)vec(X)},\right.\nonumber\\
&&\left.~~~~~~~~~~~~~~~~~~~~~~~~~~~~~~~~~~~~~~~~~~~~~~~~~~~~~~~~~~\forall X\in\Bbb R^{n\times p},~X\neq 0\right\}\nonumber\\
&=&\inf_{S}\left\{\rho: vec(X)^T(G\otimes B+S\otimes I_n-\rho G\otimes A)vec(X)-tr(S)\leq 0,\right.\nonumber\\
&&\left.~~~~~~~~~~~~~~~~~~~~~~~~~~~~~~~~~~~~~~~~~~~~~~~~~~~~~~~~~~\forall X\in\Bbb R^{n\times p},~X\neq 0\right\}\nonumber\\
&=&\inf_{S}~~\rho\nonumber\\
&&~s.t.~tr(S)\geq 0\label{eq42}\\
&&~~~~~~G\otimes B+S\otimes I_n-\rho G\otimes A\preceq 0,\label{eq43}
\end{eqnarray}
where $S\in\Bbb S^{n\times n}$. Next, we show that there is a gap between {\rm (GS)} and its Lagrangian dual {\rm (GRS)} by an example.
\begin{example}
  Let $m=n=2$, and
\[A=
\left(\begin{array}{cc}
1&~~0\\
0&~~1
\end{array}
\right),~~
G=
\left(\begin{array}{cc}
1&~~0\\
0&~~2
\end{array}
\right),~~
B=
\left(\begin{array}{cc}
1&~~0\\
0&~~3
\end{array}
\right). \]
We denote the primal and dual problem of this case by {\rm (GS1)} and {\rm (DGS1)}, respectively. In order to obtain the optimal value of {\rm (GS1)}, we can simplify it into the following reformulation:
\begin{eqnarray*}
{\rm(GS1)}~&\max& \frac{1}{trG}tr(GX^TBX)\\
~~~~~~~~~~~&{\rm s.t.}& X^TX=I_2.
\end{eqnarray*}
Then according to Lemma \ref{Xia2011}, the optimal value of {\rm (GS1)} is 
${\rm v(GS1)}=\frac{7}{3}$. As for the optimal value of the Lagrangian dual problem {\rm (DGS1)}, denote $S=\left(\begin{array}{cc}
s_{11}&s_{12}\\
s_{12}&s_{22}
\end{array}
\right)$ due to the symmetry of $S$. 
We know that 
\[G\otimes B=
\left(\begin{array}{cccc}
1&0&0&0\\
0&3&0&0\\
0&0&2&0\\
0&0&0&6
\end{array}
\right),~~
S\otimes I_n=
\left(\begin{array}{cccc}
s_{11}&0&s_{12}&0\\
0&s_{11}&0&s_{12}\\
s_{12}&0&s_{22}&0\\
0&s_{12}&0&s_{22}
\end{array}
\right),~~
G\otimes A=
\left(\begin{array}{cccc}
1&0&0&0\\
0&1&0&0\\
0&0&2&0\\
0&0&0&2
\end{array}
\right).
 \]
According to (\ref{eq42}), we obtain $s_{11}+s_{22}\geq 0$. According to (\ref{eq43}), it holds that 
\begin{eqnarray*}
\left(\begin{array}{cccc}
1+s_{11}-\rho&0&s_{12}&0\\
0&3+s_{11}-\rho&0&s_{12}\\
s_{12}&0&2+s_{22}-2\rho&0\\
0&s_{12}&0&6+s_{22}-2\rho
\end{array}
\right)
\preceq0.
\end{eqnarray*}
Thus $\rho\geq s_{11}+3$ and $\rho\geq\frac{s_{22}+6}{2}\geq\frac{-s_{11}+6}{2}$ hold.
Finally we obtain the optimal value of {\rm (DGS1)} is ${\rm v(DGS1)}=3$, when $s_{11}=s_{22}=0$. Thus the gap is $v_{gap}=|3-\frac{7}{3}|=\frac{2}{3}>0$.
\end{example}

So we can conclude that {\rm (GS)} might have a positive Lagrangian duality gap. Consequently, only making a scaling on {\rm (GTRP)} is also not enough for closing the duality gap. So in Section 2, we first add a redundant constraint, then make a scaling to obtain {\rm (GRS)}, which has the strong Lagrangian duality property.
\begin{remark}
Consider the following problem:
\begin{eqnarray*}
{\rm (NGTRP)}~&\max\limits_{X\in\Bbb R^{n\times p}}& \frac{tr(GX^TBX)+\beta}{tr(GX^TAX)+\alpha}\\
~~~~~~~~~~~&{\rm s.t.}& X^TX=I_p.
\end{eqnarray*} 
Although {\rm (NGTRP)} is non-homogeneous, we can make the following equivalent transformation: 

Since $X^TX=I_p$, we can obtain 
\[
1=\frac{1}{p}tr(X^TX)=\frac{1}{ptr(G)}tr(GX^TX),
\]
where $\alpha\in\Bbb R$, $\beta\in\Bbb R$. Then 
\begin{eqnarray*}
&&tr(GX^TBX)+\beta=tr\left(GX^T(B+\frac{\beta}{ptr(G)}I_n)X\right),\\
&&tr(GX^TAX)+\alpha=tr\left(GX^T(A+\frac{\alpha}{ptr(G)}I_n)X\right).
\end{eqnarray*}
So {\rm (NGTRP)} is equivalent to the following {\rm (GTRP)}-like problem:
\begin{eqnarray*}
{\rm (NGTRP')}~&\max\limits_{X\in\Bbb R^{n\times p}}& \frac{tr(GX^T\tilde{B}X)}{tr(GX^T\tilde{A}X)}\\
~~~~~~~~~~~&{\rm s.t.}& X^TX=I_p,
\end{eqnarray*} 
where $\tilde{B}=B+\frac{\beta}{ptr(G)}I_n$, $\tilde{A}=A+\frac{\alpha }{ptr(G)}I_n$. So the theory about {\rm (GTRP)} can also be applied to the non-homogeneous problem {\rm (NGTRP)}.
\end{remark}

\section{Conclusion}
In this paper, we study the Lagrangian duality gap of {\rm (GTRP)}. We close the duality gap of {\rm (GTRP)} by equivalently reformulating it as {\rm (GRS)}, which is obtained by adding a redundant constraint and make a scaling. And we prove that {\rm (GRS)} has no Lagrangian duality gap by establishing a matrix S-lemma. Then we show that the following three cases: {\rm (GTRP)} itself, {\rm (GR)}, which is obtained by adding a redundant to {\rm (GTRP)}, {\rm (GS)}, which is obtained by making a scaling on {\rm (GTRP)}, might have positive Lagrangian duality gap. We will consider the following more general problem
\begin{eqnarray*}
&\max\limits_{X\in\Bbb R^{n\times p}}& \frac{tr(G_2X^TBX)}{tr(G_1X^TAX)}\\
~~~~~~~~~~~&{\rm s.t.}& X^TX=I_p
\end{eqnarray*} 
and study its properties and global algorithms in the future.
%

\end{document}